\newtheorem{theorem}{Theorem}[section]
\newtheorem{corollary}[theorem]{Corollary}
\newtheorem{lemma}[theorem]{Lemma}
\newtheorem{proposition}[theorem]{Proposition}
\theoremstyle{definition}
\def\11{\textbf{$1$}}
\newcommand{\ran}{\mathrm{ran} \,}
\begin{document}

\numberwithin{equation}{section}

\title[2-local triple derivations]{2-local triple derivations on von Neumann algebras}

\author[Kudaybergenov]{Karimbergen Kudaybergenov}
\address{Ch. Abdirov 1, Department of Mathematics, Karakalpak State University, Nukus 230113, Uzbekistan}
\email{karim2006@mail.ru}

\author[Oikhberg]{Timur Oikhberg}
\address{Department of Mathematics, University of Illinois, 
Urbana IL 61801, USA}
\email{oikhberg@illinois.edu}

\author[Peralta]{Antonio M. Peralta}
\address{Departamento de An{\'a}lisis Matem{\'a}tico, Facultad de
Ciencias, Universidad de Granada, 18071 Granada, Spain.}
\email{aperalta@ugr.es}
\curraddr{Visiting Professor at Department of Mathematics, College of Science, King Saud University, P.O.Box 2455-5, Riyadh-11451, Kingdom of Saudi Arabia.}

\author[Russo]{Bernard Russo}
\address{Department of Mathematics, University of California, Irvine CA 92697-3875, USA}
\email{brusso@uci.edu}

\thanks{Second author partially supported by Simons Foundation travel grant 210060.
Third author partially supported by the Spanish Ministry of Science and Innovation,
D.G.I. project no. MTM2011-23843, Junta de Andaluc\'{\i}a grant FQM375, and the Deanship of Scientific Research
at King Saud University (Saudi Arabia) research group no. RGP-361.}

\subjclass[2011]{Primary 46L05; 46L40} 

\keywords{triple derivation; 2-local triple derivation}

\date{}
\maketitle

\begin{abstract} We prove that every {\rm(}not necessarily linear nor continuous{\rm)} 2-local triple derivation on a von Neumann algebra $M$ is a triple derivation, equivalently, the set Der$_{t} (M)$, of all triple derivations on $M,$ is algebraically 2-reflexive in the set $\mathcal{M}(M)= M^M$ of all mappings from $M$ into $M$.
\end{abstract}

\maketitle
\thispagestyle{empty}

\section{Introduction}\label{sec:intro}

Let $X$ and $Y$ be Banach spaces. According to the terminology employed in the literature (see, for example, \cite{BattMol}), a subset $\mathcal{D}$ of the Banach space $B(X,Y)$, of all bounded linear operators from $X$ into $Y$, is called \emph{algebraically reflexive} in $B(X,Y)$ when it satisfies the property:
\begin{equation}\label{eq reflexivity} T\in B(X,Y) \hbox{ with } T(x)\in \mathcal{D} (x), \ \forall x\in X \Rightarrow T\in \mathcal{D}.
\end{equation}
\emph{Algebraic reflexivity} of ${\mathcal D}$ in the space $L(X,Y)$, of all linear mappings from $X$ into $Y$, a stronger version of the above property not requiring continuity of $T$, is defined by:
\begin{equation}\label{eq reflexivity2} T\in L(X,Y) \hbox{ with } T(x)\in \mathcal{D} (x), \ \forall x\in X \Rightarrow T\in \mathcal{D}.
\end{equation}

In 1990, Kadison proved that (\ref{eq reflexivity}) holds if ${\mathcal D}$ is the set Der$(M,X)$ of all (associative) derivations on a von Neumann algebra $M$ into a dual $M$-bimodule $X$ \cite{Kad90}.   Johnson extended Kadison's result by establishing that the set ${\mathcal D}=$ Der$(A,X),$ of all (associative) derivations from a C$^*$-algebra $A$ into a Banach $A$-bimodule $X$ satisfies (\ref{eq reflexivity2}) \cite{John01}.

 Algebraic reflexivity of the set of local triple derivations on a C$^*$-algebra and on a JB$^*$-triple have been studied in \cite{Mack, BurFerGarPe2014, BurFerPe2013} and \cite{FerMolPe}. More precisely, Mackey proves in \cite{Mack} that the set ${\mathcal D}=\hbox{Der}_{t} (M),$ of all triple derivations on a JBW$^*$-triple $M$ satisfies (\ref{eq reflexivity}). The result has been supplemented in \cite{BurFerPe2013}, where Burgos, Fernández-Polo and the third author of this note prove that for each JB$^*$-triple $E$, the set ${\mathcal D}=\hbox{Der}_{t} (E)$ of all triple derivations on $E$ satisfies (\ref{eq reflexivity2}). \smallskip

Hereafter, {\it algebraic reflexivity} will refer to the stronger version  (\ref{eq reflexivity2}) which does not assume the continuity of $T$.\smallskip

In \cite{BreSemrl95}, Bre\v{s}ar and \v{S}emrl proved that the set of all (algebra) automorphisms of $B(H)$ is algebraically reflexive  whenever $H$ is a separable, infinite-dimensional Hilbert space. Given a Banach space $X$. A linear mapping $T: X\to X$ satisfying the hypothesis at \eqref{eq reflexivity2} for $\mathcal{D} = \hbox{Aut} (X)$, the set of automorphisms on $X$, is called a \emph{local automorphism}.
Larson and Sourour showed in \cite{LarSou} that for every infinite dimensional Banach space $X$, every surjective local automorphism $T$ on the Banach algebra $B(X),$  of all bounded linear operators on $X$, is an automorphism.
\smallskip

Motivated by the results of \v{S}emrl in \cite{Semrl97}, references witness a growing interest in a subtle version of algebraic reflexivity called \emph{algebraic 2-reflexivity} (cf. \cite{AyuKuday2012,AyuKuday2014, BurFerGarPe2014preprint, BurFerGarPe2014preprintTripleHom,KimKim05,LiuWong06,Mol2002,Mol2003} and \cite{Pop}). A subset $\mathcal{D}$ of the set $\mathcal{M}(X,Y) = Y^{X},$ of all mappings from $X$ into $Y$, is called \emph{algebraically 2-reflexive} when the following property holds: for each mapping $T$ in $\mathcal{M}(X,Y)$ such that for each $a,b\in X,$ there exists $S= S_{a,b}\in  \mathcal{D}$ (depending on $a$ and $b$), with $T(a) =S_{a,b} (a)$ and $T(b) = S_{a,b} (b)$, then $T$ lies in $\mathcal{D}.$ A mapping $T: X\to Y$ satisfying that for each $a,b\in X,$ there exists $S= S_{a,b}\in  \mathcal{D}$ (depending on $a$ and $b$), with $T(a) =S_{a,b} (a)$ and $T(b) = S_{a,b} (b)$ will be called a 2-local $\mathcal{D}$-mapping. If we assume that every mapping $s\in \mathcal{D}$ is $r$-
homogeneous (that is, $S(t a) = t^r S(a)$ for every $t\in \mathbb{R}$ or $\mathbb{C}$) with $0<r$, then every 2-local $\mathcal{D}$-mapping $T: X\to Y$ is $r$-homogeneous. Indeed, for each $a\in X$, $t\in \mathbb{C}$ take $S_{a,ta}\in \mathcal{D}$ satisfying $T(t a) = S_{a,ta} (ta) = t^r S_{a,ta} (a) = t^r T(a)$.\smallskip

\v{S}emrl establishes in \cite{Semrl97} that for every infinite-dimensional separable Hilbert space $H$, the sets Aut$(B(H))$ and Der$(B(H))$, of all  (algebra) automorphisms and associative derivations on $B(H)$, respectively, are algebraically 2-reflexive in $\mathcal{M}(B(H)) = \mathcal{M}(B(H),B(H)).$ Ayupov and the first author of this note proved in \cite{AyuKuday2012} that the same statement remains true for general Hilbert spaces (see \cite{KimKim04} for the finite dimensional case). Actually, the set Hom$(A)$, of all homomorphisms on a general C$^*$-algebra $A,$ is algebraically 2-reflexive in the Banach algebra $B(A)$, of all bounded linear operators on $A$, and the set $^*$-Hom$(A)$, of all $^*$-homomorphisms on $A,$ is algebraically 2-reflexive in the space $L(A)$, of all linear operators on $A$ (cf. \cite{Pe2014}).\smallskip

In recent contributions,  Burgos, Fernández-Polo and the third author of this note prove that the set Hom$(M)$ (respectively, Hom$_{t} (M)$), of all homomorphisms (respectively, triple homomorphisms) on a von Neumann algebra (respectively, on a JBW$^*$-triple) $M$, is an algebraically 2-reflexive subset of $\mathcal{M}(M)$ (cf. \cite{BurFerGarPe2014preprint}, \cite{BurFerGarPe2014preprintTripleHom}, respectively), while Ayupov and the first author of this note establish that set Der$(M)$ of all derivations on $M$ is algebraically 2-reflexive in $\mathcal{M}(M)$ (see \cite{AyuKuday2014}).\smallskip

In this paper, we consider the set Der$_{t} (A)$ of all triple derivations on a C$^*$-algebra $A$. We recall that every C$^*$-algebra $A$ can be equipped with a ternary product of the form $$\{a,b,c\} = \frac12 (a b^* c + c b^* a). $$ When $A$ is equipped with this product it becomes a JB$^*$-triple in the sense of \cite{Ka83}. A linear mapping $\delta: A\to A$ is said to be a \emph{triple derivation} when it satisfies the (triple) Leibnitz rule: $$\delta\{a,b,c\} = \{\delta(a),b,c\} + \{a,\delta(b),c\}+ \{a,b,\delta(c)\}.$$ It is known that every triple derivation is automatically continuous (cf. \cite{BarFri}). We refer to \cite{BarFri,HoMarPeRu} and \cite{PeRu} for the basic references on triple derivations. According to the standard notation, 2-local Der$_{t} (A)$-mappings from $A$ into $A$ are called \emph{2-local triple derivations}. \smallskip

The goal of this note is to explore the algebraic 2-reflexivity of Der$_{t} (A)$ in  $\mathcal{M}(A)$. Our main result proves that every {\rm(}not necessarily linear nor continuous{\rm)} 2-local triple derivation on an arbitrary von Neumann algebra $M$ is a triple derivation {\rm(}hence linear and continuous{\rm)} (see Theorem \ref {mainthm}), equivalently, Der$_{t} (M)$ is algebraically 2-reflexive in $\mathcal{M}(M)$.

\section{2-local triple derivations on von Neumann algebras}
\label{s:vna}

We start by recalling some generalities on triple derivations. Let $A$ be a C$^*$-algebra. For each $b\in A,$ we shall denote by $M_b$ the
Jordan multiplication mapping by the element $b,$ that is $M_b (x)= b\circ x = \frac12 (b x+ x b).$ Following standard notation, given elements $a,b$ in $A$, we denote by $L(a,b)$ the operator on $A$ defined by $L(a,b) (x) = \{a,b,x\} = \frac12 ( ab^* x+  xb^*a)$. It is known that he mapping $\delta(a,b) : A\to A,$ given by $$\delta(a,b)(x)= L(a,b) (x) - L(b,a) (x),$$ is a triple derivation on $A$ (cf. \cite{BarFri,HoMarPeRu}), called an inner triple derivation.\smallskip

Let
$\delta: A \to A$ be a triple derivation on a unital
C$^\ast$-algebra. By \cite[Lemmas 1 and 2]{HoMarPeRu},
$\delta(\mathbf{1})^\ast =-\delta (\mathbf{1}),$ and
$M_{\delta(\mathbf{1})} = \delta (\frac12 \delta
(\mathbf{1}),\mathbf{1})$ is an inner triple derivation on $A$ and
the difference $D = \delta - \delta (\frac12 \delta
(\mathbf{1}),\mathbf{1})$ is a Jordan $^\ast$-derivation on $A,$
more concretely,
$$
D (x\circ y ) = D(x) \circ y + x \circ D(y), \hbox{ and }
D(x^\ast) =D(x)^\ast,
$$
for every $x,y\in A.$ By \cite[Corollary 2.2]{BarFri}, $\delta$
(and hence $D$) is a continuous operator. A widely known result,
due to B.E. Johnson, states that every bounded Jordan derivation
from a C$^\ast$-algebra $A$ to a Banach $A$-bimodule is an
associative derivation (cf. \cite{John96}). Therefore, $D$ is an
associative $^\ast$-derivation in the usual sense. When $A=M$ is a
von Neumann algebra, we can guarantee that $D$ is an inner
derivation, that is there exists $\widetilde{a}\in A$ satisfying
$D(x) = [\widetilde{a},x] = \widetilde{a} x- x \widetilde{a},$ for
every $x\in A$ (cf. \cite[Theorem 4.1.6]{Sak}). Further, from the
condition $D(x^\ast ) = D(x)^\ast,$ for every $x\in A,$ we deduce
that $(\widetilde{a}^\ast +\widetilde{a}) x = x
(\widetilde{a}^\ast +\widetilde{a}).$ Thus, taking
$a=\displaystyle\frac{1}{2} (\widetilde{a}-\widetilde{a}^\ast),$
it follows that $[a,x] =[\widetilde{a},x],$ for every $x\in M.$ We
have therefore shown that for every triple derivation $\delta$ on
a von Neumann algebra $M,$ there exist skew-hermitian elements
$a,b\in M$ satisfying
$$
\delta (x) = [a,x] + b\circ x,
$$ for every $x\in M.$\smallskip

 Our first lemma is a direct consequence of the above arguments (see \cite[Lemmas 1 and 2]{HoMarPeRu}).

\begin{lemma}\label{l 2-local T(1) skew}
Let $T: A\to A$ be a {\rm(}not necessarily linear nor
continuous{\rm)} 2-local triple derivation on a unital
C$^*$-algebra. Then
\begin{enumerate}[$(a)$]
\item $T(\mathbf{1})^\ast = -T(\mathbf{1});$
\item  $M_{T(\mathbf{1})} =
\delta \left(\frac12 T (\mathbf{1}),\mathbf{1}\right)$ is an inner
triple derivation on $A;$
\item   $\widehat{T}=T - \delta \left(\frac12 T
(\mathbf{1}),\mathbf{1}\right)$ is a 2-local triple derivation on
$A$ with $\widehat{T}(\mathbf{1}) = 0.$
\end{enumerate} $\hfill\Box$
\end{lemma}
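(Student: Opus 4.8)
The plan is to derive all three parts directly from the structural properties of genuine triple derivations recalled immediately before the statement, transferring each property from the witnessing derivations to $T$ by invoking the defining property of 2-locality.

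First I would prove $(a)$ by testing the 2-local hypothesis on the pair $(\mathbf{1},\mathbf{1})$: there is a genuine triple derivation $S = S_{\mathbf{1},\mathbf{1}}$ with $T(\mathbf{1}) = S(\mathbf{1})$. Since $S$ is an honest triple derivation on a unital C$^*$-algebra, the recalled fact that $\delta(\mathbf{1})^\ast = -\delta(\mathbf{1})$ applies to $S$, whence $T(\mathbf{1})^\ast = S(\mathbf{1})^\ast = -S(\mathbf{1}) = -T(\mathbf{1})$. So $T(\mathbf{1})$ is skew-hermitian.

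For $(b)$, set $c = T(\mathbf{1})$. Having $c^\ast = -c$ in hand, I would expand $\delta(\frac12 c,\mathbf{1})$ from the definitions $\delta(x,y) = L(x,y) - L(y,x)$ and $L(x,y)(z) = \frac12(x y^\ast z + z y^\ast x)$. A one-line computation gives $L(\frac12 c,\mathbf{1})(z) = \frac14(cz + zc)$ and $L(\mathbf{1},\frac12 c)(z) = \frac14(c^\ast z + z c^\ast)$; substituting $c^\ast = -c$ collapses the difference to $\frac12(cz + zc) = c\circ z = M_c(z)$. This shows $M_{T(\mathbf{1})} = \delta(\frac12 T(\mathbf{1}),\mathbf{1})$, which is an inner triple derivation by its very form.

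Finally, for $(c)$ I would exploit that the triple derivations form a linear subspace of $\mathcal{M}(A)$, so subtracting the single fixed inner triple derivation $\delta(\frac12 T(\mathbf{1}),\mathbf{1})$ cannot destroy 2-locality: given $a,b\in A$, if $S_{a,b}$ witnesses 2-locality of $T$ at $(a,b)$, then $S_{a,b} - \delta(\frac12 T(\mathbf{1}),\mathbf{1})$ is again a triple derivation and witnesses it for $\widehat{T}$. The normalization $\widehat{T}(\mathbf{1}) = T(\mathbf{1}) - M_{T(\mathbf{1})}(\mathbf{1}) = T(\mathbf{1}) - T(\mathbf{1})\circ\mathbf{1} = 0$ then follows from $(b)$. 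I expect no serious obstacle here; the only point demanding attention is the logical order, since the identification in $(b)$ hinges on the skew-hermitian property from $(a)$, and the reduction in $(c)$ relies on the easy but essential stability of 2-locality under subtraction of one fixed inner triple derivation.
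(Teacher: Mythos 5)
Your proposal is correct and follows essentially the same route as the paper, which states the lemma as a direct consequence of the facts recalled just before it (from Lemmas 1 and 2 of \cite{HoMarPeRu}): transfer $\delta(\mathbf{1})^\ast=-\delta(\mathbf{1})$ to $T$ via a witnessing derivation at the pair $(\mathbf{1},\mathbf{1})$, identify $M_{T(\mathbf{1})}$ with the inner triple derivation $\delta\left(\frac12 T(\mathbf{1}),\mathbf{1}\right)$ using skew-hermitianness, and observe that subtracting this fixed inner triple derivation preserves 2-locality since the witnesses $S_{a,b}-\delta\left(\frac12 T(\mathbf{1}),\mathbf{1}\right)$ are again triple derivations. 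Your explicit computation in $(b)$ simply spells out what the paper delegates to the cited reference, and your attention to the logical order $(a)\Rightarrow(b)\Rightarrow(c)$ matches the structure of the recalled arguments.
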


In what follows, we denote by $A_{sa}$ the hermitian elements of the C$^*$-algebra $A$.
\begin{lemma}\label{l sead} Let $T: A\to A$ be a {\rm(}not necessarily linear nor continuous{\rm)}
2-local triple derivation on a unital C$^\ast$-algebra satisfying
$T(\mathbf{1})=0.$ Then $T(x)=T(x)^\ast$ for all $x\in A_{sa}.$
\end{lemma}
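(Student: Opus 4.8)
The plan is to reduce the assertion to the structure theory of genuine triple derivations already recalled in this section, by invoking the 2-local hypothesis at the test pair $\{\mathbf{1},x\}$. Fix $x\in A_{sa}$. By the definition of a 2-local triple derivation there exists a (genuine) triple derivation $S=S_{\mathbf{1},x}\in \hbox{Der}_{t}(A)$ with $S(\mathbf{1})=T(\mathbf{1})$ and $S(x)=T(x)$. Since $T(\mathbf{1})=0$ by hypothesis, this immediately forces $S(\mathbf{1})=0$.

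Next I would feed $S$ into the decomposition recalled before Lemma \ref{l 2-local T(1) skew}. For any triple derivation $S$ on a unital C$^*$-algebra one has $S=\delta\left(\frac12 S(\mathbf{1}),\mathbf{1}\right)+D$, where $\delta\left(\frac12 S(\mathbf{1}),\mathbf{1}\right)=M_{S(\mathbf{1})}$ is an inner triple derivation and $D$ is a Jordan $^*$-derivation, hence by Johnson's theorem an associative $^*$-derivation satisfying $D(y^*)=D(y)^*$ for all $y\in A$. Because $S(\mathbf{1})=0$, the Jordan multiplication term $M_{S(\mathbf{1})}$ vanishes, so $S=D$ and therefore $S$ itself satisfies $S(y^*)=S(y)^*$ for every $y\in A$. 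Evaluating this at the self-adjoint element $x=x^*$ gives $S(x)=S(x^*)=S(x)^*$, i.e. $S(x)$ is hermitian. Combining with $S(x)=T(x)$ yields $T(x)=S(x)=S(x)^*=T(x)^*$, and since $x\in A_{sa}$ was arbitrary the lemma follows.

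I do not expect a genuine obstacle here: the entire weight of the argument rests on correctly choosing the pair $\{\mathbf{1},x\}$ (rather than, say, two self-adjoint elements) so that the assumption $T(\mathbf{1})=0$ transfers to $S(\mathbf{1})=0$ and thereby kills the inner/Jordan multiplication part of $S$, leaving a pure $^*$-derivation. Everything else is supplied verbatim by the structure result for triple derivations on unital C$^*$-algebras recalled above, so the proof is essentially a one-line application of 2-locality once the test pair is identified.
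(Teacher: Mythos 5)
Your proposal is correct, but it follows a different route than the paper. The paper's own proof is a self-contained two-line ternary computation: it takes the same test pair, writes $T(x)^\ast = \{\mathbf{1},T(x),\mathbf{1}\} = \{\mathbf{1},\delta_{x,\mathbf{1}}(x),\mathbf{1}\}$, and expands $\delta_{x,\mathbf{1}}\{\mathbf{1},x,\mathbf{1}\}$ by the triple Leibnitz rule, using $\delta_{x,\mathbf{1}}(\mathbf{1})=T(\mathbf{1})=0$ and the identity $\{\mathbf{1},a,\mathbf{1}\}=a^\ast$; nothing beyond the Leibnitz rule is needed. You instead feed the chosen derivation $S=S_{\mathbf{1},x}$ into the decomposition $S = \delta\left(\frac12 S(\mathbf{1}),\mathbf{1}\right) + D$ recalled from \cite{HoMarPeRu} at the start of Section 2, note that $S(\mathbf{1})=0$ kills the inner part (indeed $\delta(0,\mathbf{1})=0$), and read off the $^\ast$-preservation $S(y^\ast)=S(y)^\ast$ from the Jordan $^\ast$-derivation property of $D$. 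Both proofs share the essential idea --- transfer $T(\mathbf{1})=0$ to $S(\mathbf{1})=0$ via the pair $\{\mathbf{1},x\}$ --- so the difference is in how one shows that a triple derivation annihilating the unit preserves adjoints: the paper proves this inline from the axioms, while you cite the structure theory, which is heavier but arguably more conceptual and reusable. One small inaccuracy in your write-up: the property $D(y^\ast)=D(y)^\ast$ is already part of the Jordan $^\ast$-derivation statement in \cite[Lemmas 1 and 2]{HoMarPeRu} as recalled in the paper; Johnson's theorem is only needed to upgrade $D$ to an \emph{associative} derivation, which your argument never uses, so that citation is superfluous (though harmless).
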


\begin{proof} Let $x\in A_{sa}.$ By assumptions,
$$
T(x)^\ast = \{\mathbf{1},T(x),\mathbf{1}\} =
\{\mathbf{1},\delta_{x,\mathbf{1}}(x),\mathbf{1}\} =
\delta_{x,\mathbf{1}} \{\mathbf{1},x,\mathbf{1}\} - 2
\{\delta_{x,\mathbf{1}}(\mathbf{1}),x,\mathbf{1}\}
$$
$$
= \delta_{x,\mathbf{1}} (x^\ast) - 2
\{T(\mathbf{1}),x,\mathbf{1}\} = \delta_{x,\mathbf{1}} (x) = T(x).
$$
The proof is complete.
\end{proof}

\begin{lemma}\label{l sea} Let $T: M\to M$ be a {\rm(}not necessarily linear nor continuous{\rm)}
2-local triple derivation on a von Neumann algebra satisfying
$T(\mathbf{1})=0.$ Then for every $x, y\in M_{sa}$ there exists a
skew-hermitian element $a_{x,y}\in M$ such that
$$
T (x)=[a_{x,y},x],\hbox{ and, } T(y)=[a_{x,y},y].
$$
\end{lemma}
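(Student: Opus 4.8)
The plan is to fix $x,y\in M_{sa}$ and combine the 2-locality of $T$ with the structural description of triple derivations on a von Neumann algebra recorded in the discussion preceding Lemma~\ref{l 2-local T(1) skew}. By the very definition of a 2-local triple derivation, there is a triple derivation $\delta_{x,y}$ on $M$ satisfying $T(x)=\delta_{x,y}(x)$ and $T(y)=\delta_{x,y}(y)$. Applying the structure result to this single $\delta_{x,y}$ produces skew-hermitian elements $a,b\in M$ with $\delta_{x,y}(z)=[a,z]+b\circ z$ for every $z\in M$. The candidate for the desired element will be $a_{x,y}:=a$; note that it is furnished simultaneously for $x$ and $y$ precisely because 2-locality delivers one derivation $\delta_{x,y}$ interpolating $T$ at both points.

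The key step is a parity (hermitian/skew-hermitian) argument that kills the Jordan term. For $a,b$ skew-hermitian and $z\in M_{sa}$ one checks directly from $a^\ast=-a$, $b^\ast=-b$, $z^\ast=z$ that $[a,z]^\ast=[a,z]$ while $(b\circ z)^\ast=-(b\circ z)$; thus $[a,z]$ and $b\circ z$ are exactly the hermitian and skew-hermitian parts of $\delta_{x,y}(z)$. On the other hand, since $T(\mathbf{1})=0$, Lemma~\ref{l sead} applies and shows that $T(x)$ and $T(y)$ are hermitian. Comparing the two parts in $T(x)=\delta_{x,y}(x)=[a,x]+b\circ x$ forces the skew-hermitian summand to vanish, that is $b\circ x=0$, whence $T(x)=[a,x]$; the identical argument at $y$ gives $b\circ y=0$ and $T(y)=[a,y]$. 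With $a_{x,y}=a$ this is exactly the asserted conclusion.

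I do not expect a genuine obstacle here once the right two ingredients are paired; the only point requiring a little care is that one must \emph{not} try to arrange $\delta_{x,y}(\mathbf{1})=0$ (equivalently $b=0$), which is false for a general interpolating derivation. Instead the Jordan contribution is annihilated \emph{pointwise} at $x$ and at $y$, via $b\circ x=0$ and $b\circ y=0$, and this pointwise vanishing is all that is needed to rewrite $T(x)$ and $T(y)$ as commutators with the common skew-hermitian element $a$.
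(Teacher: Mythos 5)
Your proposal is correct and follows essentially the same route as the paper: both invoke 2-locality together with the structure theorem $\delta(z)=[a,z]+b\circ z$ (with $a,b$ skew-hermitian) to interpolate $T$ at $x$ and $y$, then use Lemma~\ref{l sead} ($T(x)=T(x)^\ast$) and the hermitian/skew-hermitian parity of $[a,x]$ versus $b\circ x$ to force $b\circ x=0$ and $b\circ y=0$. The paper phrases this as taking adjoints of the identity $T(x)=[a_{x,y},x]+b_{x,y}\circ x$ and comparing, which is exactly your decomposition into hermitian and skew-hermitian parts.
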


\begin{proof}
For every $x, y\in M_{sa}$ we can find skew-hermitian elements $a_{x,y}, b_{x,y}\in
M$ such that
$$
T(x)=[a_{x,y},x]+b_{x,y}\circ x,\hbox{ and, }T(y)=[a_{x,y},y]+b_{x,y}\circ y.
$$
Taking into account that $T(x)=T(x)^\ast$ (see Lemma \ref{l sead}) we obtain
$$
[a_{x,y},x]+b_{x,y}\circ x=T(x)=T(x)^\ast=[a_{x,y},
x]^\ast+(b_{x,y}\circ x)^\ast
$$
$$
=[x, a_{x,y}^\ast]+x\circ b_{x,y}^\ast=[x, -a_{x,y}]-x\circ
b_{x,y}=[a_{x,y},x]-b_{x,y}\circ x,
$$
i.e. $b_{x,y}\circ x=0,$ and similarly $b_{x,y}\circ y=0.$ Therefore $T(x)=[a_{x,y},x],$ $T(y)=[a_{x,y},y],$ and the proof is complete.
\end{proof}

We state now an observation, which plays
an useful role in our study.

\begin{lemma}\label{l:ax=xa, bx=xb=0} Let $a$ and $b$ be skew-hermitian elements in a C$^\ast$-algebra  $A$.
Suppose $x \in A$ is self-adjoint with $ax - xa + bx + xb = 0.$
Then $ax = xa,$ and $bx = - xb.$
\end{lemma}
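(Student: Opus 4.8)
The plan is to exploit the self-adjointness of $x$ together with the skew-hermitian conditions $a^*=-a$ and $b^*=-b$, and the only tool needed is the adjoint operation. The idea is that taking the adjoint of the hypothesis reverses the order of each product and introduces sign changes governed precisely by the (skew-)symmetry of $a$, $b$, $x$; comparing the original identity with its adjoint will separate the commutator part $[a,x]=ax-xa$ from the anticommutator part $bx+xb$.

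Concretely, I would start from the given identity $ax - xa + bx + xb = 0$ and apply $*$ to both sides. Using $(yz)^*=z^*y^*$ together with $x^*=x$, $a^*=-a$, $b^*=-b$, each term transforms as $(ax)^*=-xa$, $(xa)^*=-ax$, $(bx)^*=-xb$, $(xb)^*=-bx$, so that the adjoint identity reads $ax - xa - bx - xb = 0$. Thus the commutator term $ax-xa$ is unchanged under taking adjoints while the anticommutator term $bx+xb$ changes sign.

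The final step is purely algebraic: adding the original identity to its adjoint annihilates the anticommutator and yields $2(ax-xa)=0$, hence $ax=xa$; substituting this back into the hypothesis leaves $bx+xb=0$, i.e. $bx=-xb$. I do not anticipate a genuine obstacle here — the statement follows by a short, direct computation. The one point requiring care is the bookkeeping of signs when distributing the adjoint across the four products, since a single misattributed sign would spoil the cancellation; beyond that, the argument is routine.
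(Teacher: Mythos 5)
Your proof is correct and is essentially identical to the paper's own argument: both take the adjoint of the hypothesis (using $x^*=x$, $a^*=-a$, $b^*=-b$) to get $ax-xa-(bx+xb)=0$, then add and subtract the two identities to isolate $[a,x]=0$ and $bx+xb=0$. Your sign bookkeeping for the four terms is accurate, so nothing needs fixing.
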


\begin{proof}
Since $0 = ax - xa + bx + xb$. Passing to the adjoint, we obtain
$ax - xa - (bx + xb) = 0$. Conclude the proof by adding and
subtracting these two equalities. The proof is complete.
\end{proof}

Let $M$ be a von Neumann algebra. If $x \in M_{sa}$, we denote by $s(x)$ the support projection of $x$ --
that is, the projection onto $(\ker (x))^\perp = \overline{\ran (x)}$.
We say that $x$ \emph{has full support} if $s(x) = 1$ (equivalently,
$\ker (x) = \{0\}$).

\begin{lemma}\label{l:product} Let $M$ be a von Neumann algebra.
Suppose $u \in M_+$ has full support, $c \in M$ is self-adjoint,
and $\sigma(c^2 u) \cap (0,\infty) = \emptyset$. Then $c = 0.$
Consequently, if $u$ and $c$ are as above, and $uc + cu = 0$
{\rm(}or $c^2 u = - cuc \leq 0${\rm)}, then $c = 0.$
\end{lemma}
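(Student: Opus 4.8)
The plan is to reduce the whole statement to the single positive element $cuc$ via a standard spectral-permanence argument, and then to use the full support of $u$ only at the very end. First I would record that $cuc$ is positive: writing $cuc = (u^{1/2}c)^\ast(u^{1/2}c)$, where $u^{1/2}$ is the positive square root of $u$, shows $cuc \geq 0$, so $\sigma(cuc) \subseteq [0,\infty)$. Next I would invoke the elementary identity $\sigma(xy)\cup\{0\} = \sigma(yx)\cup\{0\}$, valid in any unital Banach algebra (and $M$ is unital), applied with $x = c$ and $y = cu$. Since $xy = c^2 u$ and $yx = cuc$, this gives $\sigma(c^2 u)\cup\{0\} = \sigma(cuc)\cup\{0\} \subseteq [0,\infty)$, so in particular $\sigma(c^2 u)\subseteq[0,\infty)$. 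Combining this with the hypothesis $\sigma(c^2 u)\cap(0,\infty)=\emptyset$ forces $\sigma(c^2 u)=\{0\}$, hence $\sigma(cuc)=\{0\}$ as well. As $cuc$ is self-adjoint its norm equals its spectral radius, so $cuc = 0$, and therefore $u^{1/2}c = 0$ (a square $x^\ast x$ vanishes iff $x$ does).

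It remains to pass from $u^{1/2}c = 0$ to $c = 0$, and this is the one step where fullness of the support is indispensable. Taking adjoints gives $cu^{1/2} = 0$. For each $n$ I would let $p_n = \chi_{(1/n,\infty)}(u)$ be the corresponding spectral projection of $u$, and observe that with the bounded Borel function $g_n(t) = t^{-1/2}\chi_{(1/n,\infty)}(t)$ one has $p_n = u^{1/2}g_n(u)$. Then $c\,p_n = (cu^{1/2})\,g_n(u) = 0$. Since $u$ has full support, $p_n \uparrow s(u) = \mathbf{1}$ in the strong operator topology, and letting $n\to\infty$ (using strong continuity of left multiplication by $c$) yields $c = c\,\mathbf{1} = 0$.

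For the ``consequently'' clause I would argue directly: if $uc + cu = 0$ then $c^2 u = c(cu) = -c(uc) = -cuc \leq 0$, so $\sigma(c^2 u)\subseteq(-\infty,0]$ and in particular $\sigma(c^2 u)\cap(0,\infty)=\emptyset$; thus the hypothesis of the main assertion is met and $c = 0$ follows. I expect the genuinely delicate point to be exactly the final deduction $u^{1/2}c = 0 \Rightarrow c = 0$, where the spectral-projection approximation of $s(u)$ and the fullness $s(u) = \mathbf{1}$ are essential; by contrast the positivity of $cuc$ and the Jacobson-type spectral identity are routine, and the unitality of $M$ removes the usual nonunital subtleties about the spectrum.
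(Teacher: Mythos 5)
Your proof is correct and follows essentially the same route as the paper: positivity of $cuc$, the spectral identity $\sigma(xy)\cup\{0\}=\sigma(yx)\cup\{0\}$ to force $cuc=0$, hence $u^{1/2}c=0$, and finally the full support of $u$ to conclude $c=0$, with the ``consequently'' clause handled identically via $c^2u=-cuc\leq 0$. The only cosmetic difference is the last step, where the paper invokes support projections directly (namely $s(c)\leq \mathbf{1}-s(u)=0$) while you approximate $s(u)=\mathbf{1}$ by the spectral projections $\chi_{(1/n,\infty)}(u)$; these are the same argument in different clothing.
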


\begin{proof} For the fist statement of the lemma,
suppose $\sigma(c^2 u) \cap (0,\infty) = \emptyset$. Note that
$$(-\infty,0]\supseteq
\sigma(c^2 u) \cup \{0\} = \sigma (c \cdot c u) \supseteq \sigma(c
u c).
$$
However, $cuc$ is positive, hence $\sigma(cuc) \subset [0,
\|cuc\|]$, with $\max_{\lambda \in \sigma(cuc)} = \|cuc\|.$ Thus,
$c u^{1/2} u^{1/2} c = cuc = 0,$ which means that $c u^{1/2} =
u^{1/2} c=0$ and hence $s(c) \subset 1-(u^{1/2}) = 1-s(u)=0,$
which leads to $c = 0.$\smallskip

To prove the second part, we have $c^2 u = - cuc \leq 0,$ hence in
particular, $\sigma(c^2 u) \subset (-\infty,0].$ The proof is
complete. \end{proof}

In \cite[Lemma 2.2]{AyuKuday2014}, Ayupov and the first author of
this note prove that for every {\rm(}not necessarily linear nor
continuous{\rm)} $2$-local derivation on a von Neumann algebra
$\Delta:M\to M$, and every self-adjoint element $z\in M$, there
exists $a\in M$ satisfying $$\Delta (x) = [a,x],$$ for every $x\in
\mathcal{W}^\ast(z)$, where $\mathcal{W}^\ast(z)=\{z\}''$ denotes the
abelian von Neumann subalgebra of $M$ generated by the element $z$,
and the unit element and $\{z\}''$ denotes the bicommutant of the set $\{z\}$.
We prove next a ternary version of this
result.

\begin{lemma}\label{l linearity on single generated von Neumann subalgebras}
Let $T: M \to M$ be a {\rm(}not necessarily linear nor continuous{\rm)} $2$-local
triple derivation on a von Neumann algebra. Let $z\in M$ be a self-adjoint element and let
$\mathcal{W}^\ast(z)=\{z\}''$ be the abelian von Neumann subalgebra of $M$
generated by the element $z$ and the unit element. Then there exist skew-hermitian
elements $a_z, b_z\in M$, depending on $z$, such that
\begin{equation*}\label{spat}
T(x)=[a_z,x] + b_z\circ x = a_z x-x a_z + \frac12 (b_z x+x b_z)
\end{equation*}
for all $x\in \mathcal{W}^\ast(z).$  In particular, $T$ is linear on $\mathcal{W}^\ast(z).$
\end{lemma}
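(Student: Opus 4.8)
The goal is to produce a single skew-hermitian pair $(a_z, b_z)$ that simultaneously implements $T$ on all of $\mathcal{W}^\ast(z)$, upgrading the pointwise/pairwise data supplied by the 2-local hypothesis to a genuinely linear action.

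The plan is as follows. Let me sketch the reasoning.

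\medskip

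\emph{Step 1: Reduce to a good generator.} First I would choose a single self-adjoint element $w$ whose support projection is $1$ and which generates $\mathcal{W}^\ast(z)$ together with the unit. Since $\mathcal{W}^\ast(z)$ is abelian and generated by $z$ and $\mathbf{1}$, it is generated by a single self-adjoint $w$ of full support (for instance, a suitable strictly positive combination of $z$ and $\mathbf{1}$, or an operator obtained by continuous functional calculus on $z$ that has trivial kernel). The point of insisting on full support is to be able to invoke Lemma \ref{l:product} later, whose hypothesis requires a full-support positive element.

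\medskip

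\emph{Step 2: Extract the implementing elements from the pair $(w,x)$.} Fix an arbitrary $x\in \mathcal{W}^\ast(z)_{sa}$. By Lemma \ref{l sea} applied to the pair $\{w,x\}$, there is a skew-hermitian $a_{w,x}$ with $T(w)=[a_{w,x},w]$ and $T(x)=[a_{w,x},x]$. Separately, fixing once and for all the value $T(w)$, I would use Lemma \ref{l sea} on the pair $\{w,w\}$ (or simply on $\{w, \text{anything}\}$) to obtain a single skew-hermitian $a_w$ with $T(w)=[a_w,w]$; the natural candidate for the global implementing element is $a_z := a_w$, but I must show it works for every $x$, not just for $w$. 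The obstacle is that $a_{w,x}$ genuinely depends on $x$, so I need to compare $[a_{w,x},\cdot]$ with $[a_z,\cdot]$.

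\medskip

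\emph{Step 3: The comparison, which is the crux.} For a given self-adjoint $x\in\mathcal{W}^\ast(z)$, both $a_w$ and $a_{w,x}$ satisfy $[a_w,w]=T(w)=[a_{w,x},w]$, so the skew-hermitian element $c := a_w - a_{w,x}$ commutes with $w$, hence $c$ commutes with all of $\mathcal{W}^\ast(z)$ and in particular $[c,x]=0$. Therefore $[a_w,x]=[a_{w,x},x]=T(x)$, which shows that the \emph{single} element $a_z:=a_w$ implements $T$ on all self-adjoint $x\in\mathcal{W}^\ast(z)$. I expect this to be the main obstacle: the argument hinges on showing $c$ commutes with $w$ forces $c$ to commute with the whole abelian algebra, which uses that $w$ generates $\mathcal{W}^\ast(z)$ (so $w$ having full support, or at least generating, is exactly what makes $\{c : [c,w]=0\}$ land inside the commutant of $\mathcal{W}^\ast(z)$); here the full-support choice of Step 1 and Lemma \ref{l:product}/Lemma \ref{l:ax=xa, bx=xb=0} control any leftover Jordan-multiplication term $b$. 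Once $a_z$ is fixed, I would take $b_z$ to be the skew-hermitian part governing any residual symmetric contribution (which Lemma \ref{l sea} already shows vanishes on self-adjoint elements, forcing the formula $T(x)=[a_z,x]+b_z\circ x$ with $b_z$ determined by the full algebra, possibly $b_z=0$ on $\mathcal{W}^\ast(z)_{sa}$).

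\medskip

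\emph{Step 4: Linearity and extension off the self-adjoint part.} Having $T(x)=[a_z,x]+b_z\circ x$ for all self-adjoint $x\in\mathcal{W}^\ast(z)$, the right-hand side is manifestly real-linear in $x$, so $T$ is additive and real-homogeneous on $\mathcal{W}^\ast(z)_{sa}$. Since $\mathcal{W}^\ast(z)$ is abelian, every element splits as a complex combination of self-adjoint elements of the algebra; invoking the homogeneity of $T$ (the 2-local triple derivation $T$ is $1$-homogeneous by the general remark in the introduction, as triple derivations are linear hence $1$-homogeneous) I extend the formula and linearity from $\mathcal{W}^\ast(z)_{sa}$ to all of $\mathcal{W}^\ast(z)$. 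This yields the displayed identity $T(x)=[a_z,x]+b_z\circ x=a_z x - x a_z + \tfrac12(b_z x + x b_z)$ for every $x\in\mathcal{W}^\ast(z)$ and, since both $[a_z,\cdot]$ and $b_z\circ\,\cdot$ are linear, the ``in particular'' conclusion that $T$ is linear on $\mathcal{W}^\ast(z)$.
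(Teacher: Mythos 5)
Your Steps 1--3 contain a fixable oversight, but Step 4 has a genuine gap that undermines the proof. The oversight first: Lemma~\ref{l sea} (and Lemma~\ref{l sead}, on which it rests) requires $T(\mathbf{1})=0$, while the lemma you are proving makes no such assumption; indeed, for the inner triple derivation $T(x)=b\circ x$ with $b$ skew-hermitian and nonzero, the conclusion of Lemma~\ref{l sea} fails at $x=\mathbf{1}$. So before Step 2 you must replace $T$ by $\widehat{T}=T-\delta\left(\frac12 T(\mathbf{1}),\mathbf{1}\right)$, which is a 2-local triple derivation with $\widehat{T}(\mathbf{1})=0$ by Lemma~\ref{l 2-local T(1) skew}. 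This is also where the term $b_z$ really comes from: undoing the reduction at the end gives $b_z=T(\mathbf{1})$, not a vague ``residual symmetric contribution.'' Granting this reduction, your comparison argument in Step 3 is correct for \emph{self-adjoint} $x$: from $[a_w,w]=T(w)=[a_{w,x},w]$ the difference $c=a_w-a_{w,x}$ lies in $\{w\}'=\{w\}'''=\mathcal{W}^\ast(z)'$, so $[a_w,x]=[a_{w,x},x]=T(x)$.

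The serious gap is Step 4. Writing $x=x_1+ix_2$ and ``invoking homogeneity'' to extend the formula requires $T(x_1+ix_2)=T(x_1)+iT(x_2)$, i.e.\ additivity of $T$ across this decomposition --- but linearity of $T$ on $\mathcal{W}^\ast(z)$ is precisely the conclusion of the lemma, and at this stage $T$ is only known to be $1$-homogeneous; knowing $T$ on $\mathcal{W}^\ast(z)_{sa}$ says nothing a priori about its values at non-self-adjoint elements. The paper avoids this by never restricting to self-adjoint $x$: it normalizes the generator to a positive \emph{invertible} $z$, picks skew-hermitian $a_z,b_z$ with $T(z)=[a_z,z]+b_z\circ z$, sets $T_0=T-\left([a_z,\cdot\,]+b_z\circ\cdot\right)$, and applies the 2-local hypothesis to the pair $(z,x)$ with $x\in\mathcal{W}^\ast(z)$ \emph{arbitrary}. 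The equation at the first coordinate alone, $0=T_0(z)=[c_{z,x},z]+d_{z,x}\circ z$, forces via Lemma~\ref{l:ax=xa, bx=xb=0} that $c_{z,x}\in\{z\}'=\mathcal{W}^\ast(z)'$ and $d_{z,x}z=-zd_{z,x}$, and then Lemma~\ref{l:product} (this is where full support is genuinely used) gives $d_{z,x}=0$; hence $T_0(x)=[c_{z,x},x]+d_{z,x}\circ x=0$ for every $x\in\mathcal{W}^\ast(z)$, self-adjoint or not. Your own toolkit can be redeployed the same way --- for arbitrary $x$ take skew-hermitian $c,d$ with $T(w)=[c,w]+d\circ w$ and $T(x)=[c,x]+d\circ x$, compare with $T(w)=[a_w,w]$, and use Lemmas~\ref{l:ax=xa, bx=xb=0} and~\ref{l:product} to kill $d$ and place $c-a_w$ in $\mathcal{W}^\ast(z)'$ --- but as written, Step 4 assumes exactly what is to be proved.
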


\begin{proof} We can assume that $z\neq 0$. Note that the abelian von Neumann subalgebras
generated by $\mathbf 1$ and $z$ and by $\mathbf 1$  and $\mathbf 1+\frac{\textstyle z}{\textstyle  2\|z\|}$
coincide. So, replacing $z$ with  $\textbf{1}+\frac{\textstyle z}{\textstyle  2\|z\|}$
we can assume that $z$ is an invertible positive element.\smallskip

By definition, there exist skew-hermitian elements $a_{z}, b_{z}\in M$
(depending on $z$) such that $$ T(z) = [a_{z},z]+ b_{z} \circ z.$$

Define a mapping $T_0 : M \to M$ given by $T_0 (x) = T (x)- ([a_{z},z]+ b_{z} \circ z),$
$\ x\in M.$ Clearly, $T_0$ is a 2-local triple derivation on $M$.
We shall show that $T_0\equiv 0$ on $\mathcal{W}^\ast(z)$. Let $x\in \mathcal{W}^\ast(z)$ be an
arbitrary element. By assumptions, there exist skew-hermitian
elements $c_{z,x}, d_{z,x}\in M$ such that
$$
T_0(z)=[c_{z,x}, z]+ d_{z,x}\circ z,\hbox{ and, }  T_0(x)=[c_{z,x}, x]+d_{z,x}\circ x.
$$
Since
$$
0=T_0(z)=[c_{z,x}, z]+d_{z,x}\circ z,
$$
we get
$$
[c_{z,x}, z]+d_{z,x}\circ z=0.
$$
Taking into account that  $z$ is a hermitian element and Lemma \ref{l:ax=xa, bx=xb=0} we get
$c_{z,x} z=z c_{z,x}$ and $d_{z,x} z =- z d_{z,x}.$

Since $z$ has a full support, and $d_{z,x}^2 z =- d_{z,x} z d_{z,x}$, Lemma \ref{l:product} implies that  $d_{z,x}=0.$
Further
$$
c_{z,x}\in \{z\}'=\{z\}'''=\mathcal{W}^\ast(z)',
$$
i.e. $c_{z,x}$ commutes with any element in $\mathcal{W}^\ast(z).$
Therefore
$$
T_0(x) = [c_{z,x}, x]+d_{z,x}\circ x=0
$$
for all $x\in \mathcal{W}^\ast(z).$ The proof is complete.
\end{proof}

\subsection{Complete additivity of 2-local derivations and 2-local triple derivations on von Neumann algebras}\ \newline

Let $\mathcal{P} (M)$ denote the lattice of projections in a von
Neumann algebra $M.$
Let $X$ be a Banach space. A mapping $\mu:
\mathcal{P} (M)\to X$ is said to be \emph{finitely additive} when
$$
\mu \left(\sum\limits_{i=1}^n p_i\right) = \sum\limits_{i=1}^{n}
\mu (p_i),
$$
for every family $p_1,\ldots, p_n$ of mutually orthogonal
projections in $M.$ A mapping $\mu: \mathcal{P} (M)\to X$ is said to be \emph{bounded} when the set
$$
\left\{ \|\mu (p)\|: p \in \mathcal{P} (M) \right\}
$$ is bounded.\smallskip

The celebrated Bunce-Wright-Mackey-Gleason theorem (\cite{BuWri92, BuWri94}) states that if $M$ has no summand of type $I_2$, then every bounded finitely additive mapping $\mu: \mathcal{P} (M)\to X$ extends to a bounded linear operator from $M$ to $X$.\smallskip

According to the terminology employed in \cite{Shers2008} and \cite{Doro}, a completely
additive mapping $\mu : \mathcal{P} (M)\to \mathbb{C}$ is called a \emph{charge}.
The  Dorofeev--Sherstnev
 theorem (\cite[Theorem 29.5]{Shers2008} or \cite[Theorem 2]{Doro})  states that any charge on a  von Neumann algebra with no summands of type $I_n$ is bounded.\smallskip

We shall use the Dorofeev-Shertsnev theorem in Corollary~\ref{c 2 local triple derivations are bounded charges} in order to be able  to apply the Bunce-Wright-Mackey-Gleason theorem in Proposition~\ref{p addit no type I_n}.   To this end, we need Proposition~\ref{p AyupovKudaybergenov sigma complete additivity ternary}, which is implicitly applied in \cite[proof of Lemma 2.3]{AyuKuday2014} for 2-local associative derivations.
A  proof is included here for completeness reasons.\smallskip


First, we recall some facts about the strong$^*$-topology.  For each normal positive functional $\varphi$ in the predual of a von Neumann algebra $M$, the mapping $$x\mapsto \|x\|_{\varphi} = \left(\varphi (\frac{x x^* + x^* x}{2}) \right)^{\frac12}\quad (x\in M)$$ defines a prehilbertian seminorm on $M$. The \emph{strong$^*$ topology} of $M$ is the locally convex topology on $M$ defined by all the seminorms $\|.\|_{\varphi}$, where $\varphi$ runs in the set of all positive functionals in $M_*$ (cf. \cite[Definition 1.8.7]{Sak}). It is known that the strong$^*$ topology of $M$ is compatible with the duality $(M,M_{*})$, that is a functional $\psi: M \to \mathbb{C}$ is strong$^*$ continuous if and only if it is weak$^*$ continuous (see \cite[Corollary 1.8.10]{Sak}).
We also recall that the product of every von Neumann algebra is jointly strong$^*$ continuous on bounded sets (see \cite[Proposition 1.8.12]{Sak}).\smallskip

Suppose $X=W$ is another von Neumann algebra,
and let $\tau$ denote the norm-, the weak$^\ast$- or the
strong$^\ast$-topology of $W.$ The mapping $\mu$ is said to be
\emph{$\tau$-completely additive} (respectively, \emph{countably
or sequentially $\tau$-additive}) when
\begin{equation}\label{eq completely additive}
\mu\left(\sum\limits_{i\in I} e_i\right) =\tau\hbox{-}
\sum\limits_{i\in I}\mu(e_i)
\end{equation} for every family (respectively, sequence)  $\{e_i\}_{i\in I}$ of mutually orthogonal
projections in $M.$

It is known that every family $(p_i)_{i \in I}$ of mutually orthogonal projections in
a von Neumann algebra $M$ is summable with respect to the weak$^*$-topology of  $M$
and $\displaystyle p = \hbox{weak}^*\hbox{-}\sum_{i\in I} p_i$ is a projection in $M$
(cf. \cite[Definition 1.13.4]{Sak}). Further, for each normal positive functional
$\phi$ in $M_*$ and every finite set $F\subset I,$ we have
$$\left\|p - \sum_{i\in F} p_i \right\|_{\phi}^2 = \phi \left( p - \sum_{i\in F} p_i \right),$$
which implies that the family $(p_i)_{i \in I}$ is summable with respect to the strong$^*$-topology of $M$
with the same limit, that is, $\displaystyle p = \hbox{strong}^*\hbox{-}\sum_{i\in I} p_i$ .\smallskip

\begin{proposition}\label{p AyupovKudaybergenov sigma complete additivity ternary}
Let $T: M\to M$ be a {\rm(}not necessarily linear nor continuous{\rm)} 2-local triple
derivation on a von Neumann algebra. Then the following statements hold:\begin{enumerate}[$(a)$]
\item The restriction $T|_{\mathcal{P}(M)}$ is sequentially strong$^*$-additive, and consequently sequentially weak$^*$-additive;
\item $T|_{\mathcal{P}(M)}$ is weak$^*$-completely additive, i.e.,
\begin{equation}\label{ca}
T\left(\hbox{weak$^*$-}\sum\limits_{i\in I} p_i\right) =\hbox{weak$^*$-}
\sum\limits_{i\in I}T(p_i)
\end{equation} for every family $(p_i)_{i\in I}$ of mutually orthogonal
projections in $M.$
\end{enumerate}
\end{proposition}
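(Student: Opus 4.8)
The plan is to compress the entire orthogonal family into a single self-adjoint generator and then apply Lemma~\ref{l linearity on single generated von Neumann subalgebras}, which forces $T$ to agree on the resulting abelian subalgebra with an inner-type map $\delta(x)=[a_z,x]+b_z\circ x$ that is both weak$^*$- and strong$^*$-continuous. Once $T$ is realized as such a $\delta$ on a von Neumann subalgebra containing all the $p_i$ and their sum $p$, both additivity statements reduce to interchanging an infinite sum with a continuous linear map.

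For part $(a)$, given a sequence $(p_n)$ of mutually orthogonal projections with weak$^*$-sum $p$, I would set $z=\sum_n 2^{-n}p_n$; this series converges in norm, so $z\in M_{sa}$ with $\|z\|\le 1$, and since the scalars $2^{-n}$ are pairwise distinct and nonzero, a direct computation with the spectral measure of $z$ shows that the spectral projection of $z$ at $\{2^{-n}\}$ is exactly $p_n$. Hence each $p_n$ lies in $\mathcal{W}^*(z)=\{z\}''$ through Borel functional calculus, and, $\mathcal{W}^*(z)$ being weak$^*$-closed and containing the partial sums, so does $p$. By Lemma~\ref{l linearity on single generated von Neumann subalgebras} there are skew-hermitian $a_z,b_z$ with $T(x)=[a_z,x]+b_z\circ x$ for all $x\in\mathcal{W}^*(z)$; denote this map by $\delta$. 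As a fixed linear combination of left and right multiplications, $\delta$ is strong$^*$-continuous on bounded sets. Because the bounded net of partial sums converges strong$^*$ to $p$ (as recalled before the statement), I would pass this limit through $\delta$ to get $T(p)=\delta(p)=\mathrm{strong}^*\text{-}\sum_n \delta(p_n)=\mathrm{strong}^*\text{-}\sum_n T(p_n)$, which is the asserted sequential strong$^*$-additivity; weak$^*$-additivity is then automatic.

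For part $(b)$ I would repeat the argument verbatim for an arbitrary family $(p_i)_{i\in I}$, using a weak$^*$-convergent generator in place of a norm-convergent one. Choosing pairwise distinct scalars $\lambda_i\in(0,1]$, the finite partial sums of $\sum_i\lambda_i p_i$ are norm-bounded by $1$ and satisfy $\sum_i\lambda_i\varphi(p_i)\le\sum_i\varphi(p_i)=\varphi(p)<\infty$ for every normal positive functional $\varphi$; hence they converge weak$^*$ to a positive $z$ with $\|z\|\le1$. Distinctness of the $\lambda_i$ again yields $p_i=\chi_{\{\lambda_i\}}(z)\in\mathcal{W}^*(z)$ and $p\in\mathcal{W}^*(z)$. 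Applying Lemma~\ref{l linearity on single generated von Neumann subalgebras} once more and using that $\delta$ is weak$^*$-to-weak$^*$ continuous (each of $x\mapsto a_zx$ and $x\mapsto xa_z$ is normal), I would interchange the weak$^*$-limit of the partial sums with $\delta$ to conclude $T(p)=\delta(p)=\mathrm{weak}^*\text{-}\sum_i T(p_i)$.

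The step I expect to be the main obstacle is the reduction to a single generator itself: one must build a bounded self-adjoint $z$ whose functional calculus simultaneously recovers every $p_i$ --- which is what forces the coefficients to be distinct --- while verifying that the map $\delta$ delivered by Lemma~\ref{l linearity on single generated von Neumann subalgebras} is continuous in precisely the topology needed to move it past the (possibly uncountable) sum. After these points are secured, both $(a)$ and $(b)$ follow at once from the linearity and continuity of $\delta$ on $\mathcal{W}^*(z)$.
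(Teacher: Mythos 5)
Your part $(a)$ is essentially the paper's own argument: compress the sequence into the single generator $z=\sum_n 2^{-n}p_n$ (the paper uses $\sum_n \frac1n p_n$), apply Lemma~\ref{l linearity on single generated von Neumann subalgebras}, and pass the strong$^*$-convergent sum through the normal map $\delta(x)=[a_z,x]+b_z\circ x$. That part is correct.

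Part $(b)$ is where you diverge from the paper, and where there is a genuine gap: your construction requires choosing \emph{pairwise distinct} scalars $\lambda_i\in(0,1]$ indexed by $I$, which is only possible when $|I|\leq \mathfrak{c}$, the cardinality of the continuum. The proposition asserts complete additivity for \emph{every} family of mutually orthogonal projections, and von Neumann algebras do contain larger families: in $B(\ell_2(\Gamma))$ with $\dim \ell_2(\Gamma)>\mathfrak{c}$, the rank-one projections onto basis vectors form such a family. Moreover, this is not a defect you can repair by a cleverer choice of generator; it is intrinsic to the single-generator strategy. Indeed, $\mathcal{W}^*(z)=\{z\}''$ can never contain more than $\mathfrak{c}$ mutually orthogonal nonzero projections: any normal state of $\mathcal{W}^*(z)$ is determined by its restriction to the weak$^*$-dense subalgebra $C^*(z,\mathbf{1})\cong C(\sigma(z))$, i.e.\ by a Borel probability measure on $\sigma(z)\subseteq\mathbb{R}$, and there are only $\mathfrak{c}$ such measures; since mutually orthogonal nonzero projections are separated by normal states supported on them, their number is at most $\mathfrak{c}$. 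So for $|I|>\mathfrak{c}$ no self-adjoint $z$ with $\{p_i\}_{i\in I}\cup\{p\}\subset \mathcal{W}^*(z)$ exists, and Lemma~\ref{l linearity on single generated von Neumann subalgebras} simply cannot be brought to bear on the whole family at once.

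This is exactly the difficulty the paper's proof of $(b)$ is designed to overcome, and it needs ideas absent from your proposal. The paper fixes a positive normal functional $\varphi$, uses part $(a)$ to show that each set $I_n=\{i\in I: \|T(p_i)\|_\varphi\geq 1/n\}$ is finite (an infinite countable subfamily would contradict the strong$^*$-convergence furnished by $(a)$), so that $I_0=\{i:\|T(p_i)\|_\varphi\neq 0\}$ is countable; the countable part is then handled as in $(a)$. The crux is the residual projection $p=\sum_{i\in I\setminus I_0}p_i$, possibly a sum of more than $\mathfrak{c}$ projections, for which one must show $\varphi(T(p))=0$. There the paper does not use abelian subalgebras at all: after normalizing via Lemmas~\ref{l 2-local T(1) skew} and~\ref{l sead}, it invokes Lemma~\ref{l sea} to write $T(p)=[a_i,p]$ and $T(p_i)=[a_i,p_i]$ with a \emph{common} skew-hermitian $a_i$ for each pair $(p,p_i)$, deduces $(T(p)p)q=0=q(pT(p))$ for the support projection $q$ of $\varphi$, and then exploits the triple-derivation identity $T(p)=pT(p)+T(p)p+pT(p)p$ to conclude $\varphi(T(p))=0$. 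Within the scope where your argument applies, $|I|\leq\mathfrak{c}$, it is a genuinely slicker route than the paper's; but as a proof of the stated proposition (which is later fed into the Dorofeev--Sherstnev theorem, whose notion of charge requires additivity over arbitrary families), it is incomplete, and the missing case cannot be recovered by your method.
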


\begin{proof} $(a)$ Let $(p_n)_{n\in \mathbb{N}}$ be a sequence of mutually
orthogonal projections in $M.$ Let us consider the element $z=\sum\limits_{n\in
I}\frac{\textstyle 1}{\textstyle n} p_n.$ By Lemma \ref{l linearity on single generated von Neumann subalgebras}
there exist skew-hermitian elements $a_{z},b_{z}\in M$ such that $T(x)=[a_{z},x] + b_{z}\circ x$ for
all $x\in \mathcal{W}^\ast(z).$ Since $\sum\limits_{n=1}^{\infty} p_n, p_m\in
\mathcal{W}^\ast(z),$ for all $m\in \mathbb{N},$ and the product of $M$ is jointly strong$^*$-continuous, we obtain that
$$
T \left(\sum\limits_{n=1}^{\infty} p_n\right)=\left[a_{z}, \sum\limits_{n=1}^{\infty} p_n\right] +
b_{z}\circ \left(\sum\limits_{n=1}^{\infty} p_n \right)$$ $$=\sum\limits_{n=1}^{\infty}[a_{z},
p_n] + \sum\limits_{n=1}^{\infty} b_{z}\circ p_n=\sum\limits_{n=1}^{\infty}  T(p_n),
$$ i.e. $T|_{\mathcal{P}(M)}$ is a countably or sequentially strong$^*$-additive mapping.\medskip

$(b)$ Let $\varphi$ be a positive normal functional in $M_*$, and let $\|.\|_{\varphi}$
 denote the prehilbertian seminorm given by $\|z\|_{\varphi}^2 = \frac12 \varphi (z z^* + z^* z)$
 ($z\in M$). Let $\{p_i\}_{i\in I}$ be an arbitrary family of mutually orthogonal projections in $M.$
 For every $n\in \mathbb{N}$ define $$I_n=\{i\in I: \left\|T (p_i)\right\|_{\varphi} \geq 1/n\}.$$

We claim, that $I_n$ is a finite set for every natural $n$. Otherwise, passing to a subset if necessary,
we can assume that there exists a natural $k$ such that
$I_k$ is infinite and countable. In this case the series
$\sum\limits_{i\in I_k} T(p_i) $ does not converge with respect to the semi-norm  $\|.\|_{\varphi}$.
On the other hand, since $I_k$ is a countable set, by $(a)$, we have
$$T \left(\sum\limits_{i\in I_k} p_i\right) = \hbox{strong$^*$-}\sum\limits_{i\in I_k} T(p_i), $$
which is impossible.  This proves the claim.\smallskip

We have shown that the set
$$ I_0=\left\{i\in I: \left\|T (p_i)\right\|_{\varphi} \neq 0\right\}=\bigcup\limits_{n\in \mathbb{N}}I_n $$
is a countable set, and $\left\|T (p_i)\right\|_{\varphi}= 0$, for every $i\in I\backslash I_0$.\smallskip

Set $p=\sum\limits_{i\in I\setminus I_0} p_i\in M.$ We shall show that $\varphi (T(p)) =0.$ Let $q$
 denote the support projection of $\varphi $ in $M$.
Having in mind that $\left\|T (p_i)\right\|^2_{\varphi}= 0$, for every $i\in I\backslash I_0$,
we deduce that $T(p_i) \perp q$ for every $i\in I\backslash I_0$.\smallskip

Replacing $T$ with $\widehat{T}=T - \delta (\frac12 T ({\mathbf 1}),{\mathbf 1})$  we can assume that $T({\mathbf 1}) = 0$
(cf. Lemma \ref{l 2-local T(1) skew}) and $T(x) = T(x)^*$, for every $x\in M_{sa}$ (cf. Lemma \ref{l sead}).
By Lemma \ref{l sea}, for every $i\in I\setminus I_0$ there exists a skew-hermitian element $a_i=a_{p,p_i}\in M$ such that
$$ T (p)=a_i p- p a_i,\hbox{ and, } T(p_i)=a_i p_i-p_i a_i. $$
Since $T(p_i) \perp q$ we get $(a_i p_i-p_i a_i) q = q (a_i p_i-p_i a_i) =0$, for all $i\in I\setminus I_0.$ Thus, since $p a_i p_i q= p_i a_i q$,
$$ (T(p) p_i)  q = (a_i p - p a_i) p_i q = a_i p_i q - p a_i p_i q  $$
$$= a_i p_i q - p_i a_i  q = (a_i p_i-p_i a_i) q =0,$$ and similarly
$$ q (p_i T(p)) = 0,$$ for every $i\in I\setminus I_0$. Consequently,
\begin{equation}\label{eq qpT(p)=0} (T(p) p) q=  T(p) \left(\sum\limits_{i\in I\setminus I_0}p_i \right) q = 0 =
q \left(\sum\limits_{i\in I\setminus I_0}p_i \right) T(p)=q (p T(p)).
\end{equation}
 Therefore, $$T(p) = \delta_{p,{\mathbf 1}} (p) = \delta_{p,{\mathbf 1}} \{p,p,p\}= 2 \{ \delta_{p,{\mathbf 1}}(p),p,p\} +
 \{p,\delta_{p,{\mathbf 1}} (p), p\}  $$ $$= 2 \{T(p),p,p\} +  \{p,T (p), p\} = p T(p) + T(p) p + p T(p)^* p $$
 $$=  p T(p) + T(p) p + p T(p) p,$$ which implies that $$\varphi (T(p)) = \varphi (p T(p) + T(p) p + p T(p) p) $$
 $$= \varphi (qp T(p) q) +\varphi(q T(p) p q) + \varphi (q p T(p) p q)= \hbox{(by \eqref{eq qpT(p)=0})}= 0.$$\smallskip

Finally, by $(a)$ we have
$$T\left(\sum\limits_{i\in I_0} p_i\right)= \|.\|_{\varphi}\hbox{-}\sum\limits_{i\in I_0} T\left(p_i\right).$$
Two more applications of $(a)$ give:
$$ \varphi \left(T\left(\sum\limits_{i\in I} p_i\right)\right)  = \varphi \left(
T\left(p +\sum\limits_{i\in I_0} p_i\right) \right)=
 \varphi \left(T(p) + T\left(\sum\limits_{i\in I_0} p_i\right)\right)$$
$$ = \varphi \left(T(p)\right) + \varphi \left( T\left(\sum\limits_{i\in I_0} p_i\right)\right) = \sum\limits_{i\in I_0} \varphi \left( T\left( p_i\right)\right).$$

By the Cauchy-Schwarz inequality, $0\leq \left|\varphi T(p_i)\right|^2 \leq \left\|T (p_i)\right\|^2_{\varphi}= 0$, for every $i\in I\backslash I_0$,
and hence $\displaystyle\sum\limits_{i\in I_0} \varphi \left( T\left( p_i\right)\right) = \sum\limits_{i\in I} \varphi \left( T\left( p_i\right)\right).$
The arbitrariness of $\varphi$ shows
that $T\left(\hbox{weak$^*$-}\sum\limits_{i\in I} p_i\right) =\hbox{weak$^*$-}
\sum\limits_{i\in I}T(p_i)$.
\end{proof}

Let $\phi$ be a normal functional in the predual of a von Neumann algebra $M.$
Our previous Proposition \ref{p AyupovKudaybergenov sigma complete additivity ternary}
assures that for every (not necessarily linear nor continuous) 2-local triple derivation
$T: M\to M$ the mapping $\phi\circ T|_{\mathcal{P}(M)}: \mathcal{P}(M) \to \mathbb{C}$ is
a completely additive mapping or a charge on $M$. Under the additional hypothesis of $M$ being a
 continuous von Neumann algebra or,  more generally, a von Neumann algebra with no
 Type I$_n$-factors ($1< n< \infty$) direct summands (i.e. without direct summand isomorphic
 to a matrix algebra $M_n(\mathbb{C})$, $1<n<\infty$), the Dorofeev--Sherstnev
 theorem (\cite[Theorem 29.5]{Shers2008} or \cite[Theorem 2]{Doro})
 imply that $\phi\circ T|_{\mathcal{P}(M)}$ is a bounded charge, that is,
 the set $\left\{ |\phi\circ T (p)| : p \in \mathcal{P}(M) \right\}$ is bounded. The uniform boundedness principle gives:

\begin{corollary}\label{c 2 local triple derivations are bounded charges} Let $M$ be a von Neumann algebra with no Type I$_n$-factor direct summands {\rm(}$1< n< \infty${\rm)} and let $T: M\to M$ be a {\rm(}not necessarily linear nor continuous{\rm)} 2-local triple derivation. Then the restriction $T|_{\mathcal{P}(M)}$ is a bounded weak$^*$-completely additive mapping.$\hfill\Box$
\end{corollary}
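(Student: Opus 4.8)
The plan is to deduce the boundedness of $T|_{\mathcal{P}(M)}$ from the pointwise boundedness supplied by the Dorofeev--Sherstnev theorem, via the uniform boundedness principle; the weak$^*$-complete additivity is already furnished by Proposition~\ref{p AyupovKudaybergenov sigma complete additivity ternary}$(b)$ and requires no further work.

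First I would fix a normal functional $\phi\in M_*$ and consider the scalar-valued mapping $\phi\circ T|_{\mathcal{P}(M)}:\mathcal{P}(M)\to\mathbb{C}$. Since $\phi$ is weak$^*$-continuous and $T|_{\mathcal{P}(M)}$ is weak$^*$-completely additive by Proposition~\ref{p AyupovKudaybergenov sigma complete additivity ternary}$(b)$, the composition $\phi\circ T|_{\mathcal{P}(M)}$ is completely additive as a $\mathbb{C}$-valued map, i.e. a charge on $M$; indeed normality of $\phi$ lets it commute with the weak$^*$-convergent sums appearing in \eqref{ca}. Here the hypothesis that $M$ has no Type I$_n$-factor direct summands ($1<n<\infty$) enters: the Dorofeev--Sherstnev theorem then guarantees that every such charge is bounded, so that $\sup\{|\phi(T(p))|:p\in\mathcal{P}(M)\}<\infty$ for each fixed $\phi\in M_*$.

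Next I would invoke the identification $M=(M_*)^*$ to turn this pointwise information into a uniform bound. Each $T(p)$ defines a bounded linear functional $\Lambda_p$ on the Banach space $M_*$ by $\Lambda_p(\phi)=\phi(T(p))$, with $\|\Lambda_p\|_{(M_*)^*}=\|T(p)\|_M$. The preceding step says precisely that the family $\{\Lambda_p:p\in\mathcal{P}(M)\}$ is pointwise bounded on $M_*$, so the uniform boundedness principle yields $\sup\{\|T(p)\|:p\in\mathcal{P}(M)\}=\sup\{\|\Lambda_p\|:p\in\mathcal{P}(M)\}<\infty$. This is exactly the boundedness of $T|_{\mathcal{P}(M)}$, which together with the complete additivity already noted completes the proof.

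I do not anticipate a serious obstacle, since the hard analytic content is externalized to the Dorofeev--Sherstnev theorem (whose applicability is exactly what the Type I$_n$ hypothesis secures) and to Proposition~\ref{p AyupovKudaybergenov sigma complete additivity ternary}. The only point demanding care is the bookkeeping in the uniform boundedness step: one must view $M$ as the dual of its \emph{complete} predual $M_*$ so that the $T(p)$ act as functionals of operator norm $\|T(p)\|$, and one must note that the Dorofeev--Sherstnev conclusion is genuinely uniform in $p$ for each fixed $\phi$, being the boundedness of the single charge $\phi\circ T|_{\mathcal{P}(M)}$.
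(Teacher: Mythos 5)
Your proposal is correct and follows essentially the same route as the paper: the authors likewise compose $T|_{\mathcal{P}(M)}$ with a normal functional $\phi\in M_*$, use Proposition~\ref{p AyupovKudaybergenov sigma complete additivity ternary} to see that $\phi\circ T|_{\mathcal{P}(M)}$ is a charge, apply the Dorofeev--Sherstnev theorem (this is where the absence of Type I$_n$ summands is used) to get pointwise boundedness, and then invoke the uniform boundedness principle via the duality $M=(M_*)^*$ to obtain $\sup\{\|T(p)\|:p\in\mathcal{P}(M)\}<\infty$. Nothing is missing.
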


\subsection[Additivity of 2-local triple derivations
on hermitian parts]{Additivity of 2-local triple derivations
on hermitian parts of von Neumann algebras}\label{subsec:3.2}\ \medskip

Suppose now that $M$ is a von Neumann algebra with no Type I$_n$-factor direct
summands {\rm(}$1< n< \infty${\rm)}, and $T: M\to M$
is a {\rm(}not necessarily linear nor continuous{\rm)}
2-local triple derivation. By Corollary \ref{c 2 local triple derivations are bounded charges} combined
with the Bunce-Wright-Mackey-Gleason theorem \cite{BuWri92, BuWri94}, there
exits a bounded linear operator $G: M \to M$ satisfying that $G(p) = T(p)$, for every projection $p\in M$.\smallskip

Let $z$ be a self-adjoint element in $M$. By Lemma \ref{l
linearity on single generated von Neumann subalgebras}, there
exist skew-hermitian elements $a_{z},b_{z}\in M$ such that $T(x) =
[a_{z},x]+b_{z}\circ x,$ for every $x\in \mathcal{W}^\ast(z).$
Since $G|_{\mathcal{W}^\ast(z)}, T|_{\mathcal{W}^\ast(z)} :
\mathcal{W}^\ast(z) \to M$ are bounded linear operators, which
coincide on the set of projections of $\mathcal{W}^\ast(z)$, and
every self-adjoint element in $\mathcal{W}^\ast(z)$ can be
approximated in norm by finite linear combinations of mutually
orthogonal projections in $\mathcal{W}^\ast(z)$, it follows that
$T(x) = G(x)$ for every $x\in \mathcal{W}^\ast(z),$ and hence
$$
T(a) = G(a), \hbox{ for every } a\in M_{sa},
$$
in particular, $T$ is additive on $M_{sa}.$\smallskip

The above arguments materialize in the following result.

\begin{proposition}\label{p addit no type I_n} Let $T: M\to M$ be a {\rm(}not necessarily linear nor continuous{\rm)}
2-local triple derivation on a von Neumann algebra with no Type I$_n$-factor direct
summands {\rm(}$1< n< \infty${\rm)}. Then the restriction
$T|_{M_{sa}}$ is additive.$\hfill\Box$
\end{proposition}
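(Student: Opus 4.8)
The plan is to show that $T$ agrees on the whole self-adjoint part $M_{sa}$ with a single globally defined bounded \emph{linear} operator; additivity of $T|_{M_{sa}}$ is then immediate. The tension to resolve is that $T$ is a priori neither linear nor continuous on $M$, yet Lemma \ref{l linearity on single generated von Neumann subalgebras} tells us it is linear (indeed of the explicit spatial form $[a_z,\cdot]+b_z\circ\cdot$) on each abelian von Neumann subalgebra $\mathcal{W}^\ast(z)$ generated by a single self-adjoint element $z$. The strategy is to manufacture a global linear operator out of the behaviour of $T$ on projections, and then exploit this local linearity to identify the two maps.

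First I would invoke Corollary \ref{c 2 local triple derivations are bounded charges}: the restriction $T|_{\mathcal{P}(M)}$ is a bounded, weak$^\ast$-completely additive (in particular finitely additive) map on the projection lattice. Since $M$ has no type I$_n$ summand for $1<n<\infty$ --- in particular no type I$_2$ summand --- the Bunce--Wright--Mackey--Gleason theorem \cite{BuWri92, BuWri94} applies and produces a bounded linear operator $G\colon M\to M$ with $G(p)=T(p)$ for every projection $p\in M$.

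Next I would fix a self-adjoint $z\in M$ and compare $G$ with $T$ on $\mathcal{W}^\ast(z)$. By Lemma \ref{l linearity on single generated von Neumann subalgebras}, $T|_{\mathcal{W}^\ast(z)}$ coincides with the bounded linear map $x\mapsto [a_z,x]+b_z\circ x$; in particular it is both bounded and linear there. Thus $G|_{\mathcal{W}^\ast(z)}$ and $T|_{\mathcal{W}^\ast(z)}$ are two bounded linear operators on the abelian algebra $\mathcal{W}^\ast(z)$ that agree on its projections. Because $\mathcal{W}^\ast(z)$ is abelian, the spectral theorem lets me approximate any self-adjoint element in norm by finite real-linear combinations of mutually orthogonal projections of $\mathcal{W}^\ast(z)$; combining agreement on projections with the linearity and continuity of both maps forces $G(x)=T(x)$ for all $x\in\mathcal{W}^\ast(z)$.

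Since every $a\in M_{sa}$ lies in $\mathcal{W}^\ast(a)$, the previous step yields $T(a)=G(a)$ for all $a\in M_{sa}$. Hence $T|_{M_{sa}}$ is the restriction of the linear operator $G$, and in particular it is additive, which completes the proof. The only genuinely hard inputs are the two cited theorems used to build $G$; the main thing to get right is the gluing step, namely that the local linearity supplied by Lemma \ref{l linearity on single generated von Neumann subalgebras} upgrades pointwise agreement on projections to agreement on all self-adjoint elements. I expect no further obstacle beyond this, since once $T$ is known to be linear and bounded on each $\mathcal{W}^\ast(z)$, the spectral approximation is routine.
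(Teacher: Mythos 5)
Your proposal is correct and follows essentially the same route as the paper's own argument: Corollary \ref{c 2 local triple derivations are bounded charges} plus the Bunce--Wright--Mackey--Gleason theorem produce the bounded linear operator $G$ agreeing with $T$ on projections, and Lemma \ref{l linearity on single generated von Neumann subalgebras} together with norm-approximation of self-adjoint elements by finite linear combinations of mutually orthogonal projections identifies $T$ with $G$ on each $\mathcal{W}^\ast(z)$, hence on all of $M_{sa}$. There is no substantive difference between your gluing step and the paper's.
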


\begin{corollary}\label{l:addit on properly infinite} Let $T: M\to M$ be a {\rm(}not necessarily linear nor continuous{\rm)} 2-local triple derivation on a
properly infinite von Neumann algebra. Then the restriction $T|_{M_{sa}}$ is additive.
\end{corollary}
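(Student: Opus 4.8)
The plan is to deduce this immediately from Proposition \ref{p addit no type I_n}, by verifying that a properly infinite von Neumann algebra already satisfies the hypothesis of that Proposition. Concretely, I would show that such an $M$ has no Type I$_n$-factor direct summand for any finite $n$ with $1 < n < \infty$ (indeed, no finite direct summand whatsoever), and then invoke the Proposition verbatim to obtain that $T|_{M_{sa}}$ is additive.

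First I would recall that $M$ is properly infinite precisely when the unit $\mathbf{1}$ is a properly infinite projection, equivalently when $M$ admits no nonzero finite central projection. Next, I would observe that a direct summand isomorphic to a Type I$_n$ factor $M_n(\mathbb{C})$ with $1 < n < \infty$ corresponds to a nonzero central projection $z \in M$ with $zM \cong M_n(\mathbb{C})$. Since $M_n(\mathbb{C})$ is finite-dimensional, and hence a finite von Neumann algebra, its unit---namely $z$ regarded as a projection in $M$---is a finite projection. This exhibits a nonzero finite central projection in $M$, contradicting proper infiniteness.

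Therefore $M$ has no Type I$_n$-factor direct summand with $1 < n < \infty$, so Proposition \ref{p addit no type I_n} applies directly and yields that the restriction $T|_{M_{sa}}$ is additive, as desired.

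I do not expect any genuine obstacle here: the whole content is the elementary structure-theoretic fact that proper infiniteness precludes finite central summands, and a Type I$_n$ factor with $n$ finite is such a summand. The only point deserving a line of care is the identification of a Type I$_n$-factor direct summand with a nonzero finite central projection, which is standard from the type and finiteness classification of von Neumann algebras.
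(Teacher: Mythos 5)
Your proposal is correct and matches the paper's (implicit) reasoning exactly: the paper states this corollary without proof precisely because it follows from Proposition \ref{p addit no type I_n} once one notes that a properly infinite von Neumann algebra admits no nonzero finite central projection, hence no direct summand isomorphic to a finite-dimensional factor $M_n(\mathbb{C})$, $1<n<\infty$. Your filled-in verification of this structure-theoretic fact is accurate and is the whole content of the deduction.
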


Next we shall show that the conclusion of the above corollary is also true for a finite
von Neumann algebra.\smallskip

First we show that every 2-local triple derivation on a von Neumann algebra ``intertwines'' central projections.

\begin{lemma}\label{l:preserves central projections}
If $T$ is a {\rm(}not necessarily linear nor continuous{\rm)} $2$-local triple derivation on a von Neumann algebra
$M,$ and $p$ is a central projection in $M,$ then $T(Mp) \subset
Mp.$ In particular, $T(px) = pT(x)$ for every $x\in M$.
\end{lemma}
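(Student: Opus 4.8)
The plan is to reduce everything to the structural description of triple derivations obtained in the paragraph preceding Lemma~\ref{l 2-local T(1) skew}: every triple derivation $\delta$ on a von Neumann algebra $M$ has the form $\delta(x) = [a,x] + b\circ x$ for suitable skew-hermitian elements $a,b\in M$. The crucial elementary observation is that a central projection $p$ commutes with \emph{every} element of $M$, in particular with the $a$ and $b$ attached to any triple derivation. First I would record the purely algebraic identity this yields: for a central projection $p$ and any triple derivation $\delta$,
\[
\delta(px) = a(px)-(px)a + \tfrac12\bigl(b(px)+(px)b\bigr) = p\bigl([a,x]+b\circ x\bigr) = p\,\delta(x)
\]
for every $x\in M$, where each equality uses $pa=ap$ and $pb=bp$. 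In particular $\delta(Mp)\subset Mp$, since $Mp=\{px:x\in M\}$ when $p$ is central.

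With this identity in hand, the 2-local hypothesis does the rest. For the inclusion $T(Mp)\subset Mp$, take $y\in Mp$ and apply the 2-local property to the pair $(y,y)$ to obtain a triple derivation $\delta$ with $T(y)=\delta(y)$; since $\delta(Mp)\subset Mp$ and $y\in Mp$, we get $T(y)=\delta(y)\in Mp$. For the pointwise identity, given $x\in M$ I would instead apply the 2-local property to the pair $(px,x)$ to produce a single triple derivation $\delta=\delta_{px,x}$ with $T(px)=\delta(px)$ and $T(x)=\delta(x)$; the centrality identity above then gives $T(px)=\delta(px)=p\,\delta(x)=p\,T(x)$, as asserted.

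The step I expect to require the most care is the logical use of the 2-local hypothesis. Knowing merely that each triple derivation maps $Mp$ into $Mp$ yields the inclusion $T(Mp)\subset Mp$ but \emph{not} the pointwise identity $T(px)=pT(x)$, because $T$ is a priori neither linear nor additive and the 2-local property controls $T$ only at two chosen points through a common derivation. The whole force of the argument therefore comes from choosing the pair $(px,x)$, so that one and the same derivation $\delta_{px,x}$ simultaneously computes $T(px)$ and $T(x)$, allowing the centrality identity $\delta(px)=p\,\delta(x)$ to be transported from $\delta$ to $T$. Everything else is the routine verification that $a$ and $b$ commute with the central projection $p$.
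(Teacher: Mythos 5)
Your proof is correct, and for the key ``In particular'' identity it is exactly the paper's argument: the paper applies the 2-local hypothesis to the pair $(x,xp)$, writes the common derivation as $[a,\cdot]+b\circ\cdot$ with $a,b$ skew-hermitian, and uses centrality of $p$ --- precisely your $\delta_{px,x}$ computation. The only divergence is in the inclusion $T(Mp)\subset Mp$: there the paper does not invoke the structural form at all, but instead applies the 2-local property to the pair $(x,p)$ and expands $T(x)=\delta_{x,p}\{x,p,p\}$ by the triple Leibniz rule, each of the three resulting terms visibly lying in $Mp$ by centrality; your route --- matching $T$ with a derivation at the single point $y$ and using $\delta(py)=p\,\delta(y)$ --- is equally valid, and is in fact subsumed by your second computation, since $y=py$ gives $T(y)=T(py)=p\,T(y)\in Mp$. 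The difference is cosmetic rather than structural, so nothing further is needed.
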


\begin{proof}
Consider $x \in Mp$, then $x = pxp = \{x,p,p\}$. $T$ coincides
with a triple derivation $\delta_{x,p}$ on the set $\{x,p\}$, hence
$$T(x)=\delta_{x,p} (x) = \delta_{x,p} \{ x,p, p\} = \{ \delta_{x,p} (x), p,p \} + \{x,
\delta_{x,p} (p), p \} + \{x,p,\delta_{x,p} (p)\}$$ lies in $Mp .$\smallskip

For the final statement, fix $x\in M,$ and consider skew-hermitian elements $a_{x,xp},$ $b_{x,xp}\in M$ satisfying
$$T(x) = [a_{x,xp},x]+ b_{x,xp}\circ x,\hbox{ and } T(xp) = [a_{x,xp},xp]+ b_{x,xp}\circ (xp).$$ The assumption $p$ being central implies that $pT(x) = T(px).\qedhere$
\end{proof}

\begin{proposition}\label{p addit finite} Let $T: M\to M$ be a {\rm(}not necessarily linear nor continuous{\rm)} 2-local triple derivation
on a finite von Neumann algebra.   Then the restriction $T|_{M_{sa}}$ is additive.
\end{proposition}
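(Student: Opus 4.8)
The plan is to reduce to a single finite type $I_n$ factor and there to run a \v{S}emrl-type argument. As a preliminary normalization, replacing $T$ by $\widehat{T}=T - \delta\left(\frac12 T(\mathbf{1}),\mathbf{1}\right)$ I may assume $T(\mathbf{1})=0$ (Lemma \ref{l 2-local T(1) skew}); since the subtracted inner triple derivation is linear, $T|_{M_{sa}}$ is additive if and only if $\widehat{T}|_{M_{sa}}$ is. Under this normalization Lemma \ref{l sead} gives $T(M_{sa})\subseteq M_{sa}$, and, crucially, Lemma \ref{l sea} shows that on self-adjoint elements the Jordan summand vanishes, so that $T$ acts on $M_{sa}$ as a \emph{local inner (associative) derivation}: for each $x\in M_{sa}$ there is a skew-hermitian $a_x$ with $T(x)=[a_x,x]$. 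This is the observation that lets the argument of \cite{AyuKuday2014} for $2$-local associative derivations be imported.

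Next I would use the type decomposition of the finite von Neumann algebra $M$ together with Lemma \ref{l:preserves central projections}. A finite $M$ splits along central projections into a summand $M_0$ possessing no direct summand $^*$-isomorphic to some $M_n(\mathbb{C})$ ($1<n<\infty$) and an atomic homogeneous part $\bigoplus_j M_{n_j}(\mathbb{C})$. On $M_0$ additivity is already granted by Proposition \ref{p addit no type I_n}. Because $T(pw)=pT(w)$ for every central projection $p$ and every $w\in M$, and because every element of a von Neumann direct sum is the weak$^*$-sum of its central components, the desired identity $T(x+y)=T(x)+T(y)$ can be checked one central summand at a time: it suffices to verify $z_j(T(x+y))=z_j(T(x)+T(y))$ for the minimal central projections $z_j$ cutting out the factors $M_{n_j}(\mathbb{C})$, the $M_0$-component being already settled. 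Moreover $S|_{z_j M}$ is a triple derivation on $z_j M\cong M_{n_j}(\mathbb{C})$ whenever $S\in\mathrm{Der}_t(M)$, so $T$ restricts to a $2$-local triple derivation on each factor. This reduces everything to proving that a $2$-local triple derivation on a single matrix algebra $M_n(\mathbb{C})$ ($2\le n<\infty$) is additive on $(M_n)_{sa}$.

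On $M_n(\mathbb{C})$ I would exploit that every maximal abelian subalgebra is singly generated, so that Lemma \ref{l linearity on single generated von Neumann subalgebras} makes $T$ additive (indeed real-linear) on any family of commuting self-adjoint elements. Fix a self-adjoint $w$ with $n$ distinct eigenvalues; then $\mathcal{W}^\ast(w)$ is a masa and $T|_{\mathcal{W}^\ast(w)}=[a_0,\cdot\,]$ for a skew-hermitian $a_0$ that is unique modulo $\mathcal{W}^\ast(w)$. For an arbitrary $y\in(M_n)_{sa}$, applying $2$-locality to the pair $(w,y)$ yields a skew-hermitian $a$ with $T(w)=[a,w]$ and $T(y)=[a,y]$; since $[a-a_0,w]=0$, we have $a=a_0+d_y$ with $d_y$ diagonal in the eigenbasis of $w$, whence $T(y)=[a_0,y]+[d_y,y]$. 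The goal is then to show $[d_y,y]=0$ for every $y$, i.e. that the single element $a_0$ implements $T$ on all of $(M_n)_{sa}$; linearity, and hence additivity, follows at once. To pin down $d_y$ I would feed further test elements into the $2$-local hypothesis, running $y$ against a second self-adjoint in generic position relative to $w$ and comparing the resulting expressions on the commuting refinements, exactly as in the matrix step of \cite{Semrl97} and \cite{AyuKuday2014}.

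The main obstacle is precisely this last matrix-factor step. For $M_n(\mathbb{C})$ the tools that powered Proposition \ref{p addit no type I_n} are unavailable: the Dorofeev--Sherstnev boundedness theorem and the Bunce--Wright--Mackey--Gleason extension both fail on type $I_n$ factors, so one cannot manufacture a global linear operator agreeing with $T$ on projections. Complete additivity on projections (Proposition \ref{p AyupovKudaybergenov sigma complete additivity ternary}) and linearity on masas are by themselves too weak to force additivity across non-commuting self-adjoints, and the naive attempt, applying $2$-locality to $(x+y,x)$ and to $(x+y,y)$, produces two different implementing elements that do not obviously reconcile. The delicate bookkeeping needed to show that one skew-hermitian $a_0$ simultaneously implements $T$ on every self-adjoint element of $M_n(\mathbb{C})$ is where the real effort lies.
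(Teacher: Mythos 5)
Your reduction scaffolding is sound: the central decomposition of a finite algebra into a summand with no finite-dimensional factor summands plus atomic factors, Lemma~\ref{l:preserves central projections}, and Proposition~\ref{p addit no type I_n} do legitimately reduce the problem to a single $M_n(\mathbb{C})$, and the summand-by-summand patching at the end is the same device the paper uses. But the proof stops exactly where it needed to start: you never establish additivity on a matrix factor. You set up $T(y)=[a_0,y]+[d_y,y]$ and then defer the elimination of $d_y$ to ``delicate bookkeeping \dots exactly as in the matrix step of \cite{Semrl97} and \cite{AyuKuday2014}'', conceding that this is the main obstacle. Those cited arguments were devised for associative $2$-local derivations and automorphisms, not triple derivations, and adapting them is precisely the content that a proof must supply; as written, the proposal is an honest reduction plus an open problem, i.e.\ it has a genuine gap.

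The irony is that you already hold the key in your first paragraph and never turn it. After normalizing $T(\mathbf{1})=0$, Lemma~\ref{l sea} gives, for each pair $x,y\in M_{sa}$, a \emph{single} skew-hermitian $a_{x,y}$ implementing $T$ at both points, whence
\begin{equation*}
T(x)y+xT(y)=[a_{x,y},x]\,y+x\,[a_{x,y},y]=[a_{x,y},xy]
\end{equation*}
is a commutator. The paper now simply uses the trace: a finite von Neumann algebra carries a faithful normal semifinite trace $\tau$, and when $\tau$ is finite the identity above gives $\tau\bigl(T(x)y\bigr)=-\tau\bigl(xT(y)\bigr)$ for all $x,y\in M_{sa}$. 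Putting $x=u+v$, $y=w$ and using linearity of $\tau$ on the right-hand side yields $\tau\bigl((T(u+v)-T(u)-T(v))w\bigr)=0$ for every $w\in M_{sa}$; choosing $w=T(u+v)-T(u)-T(v)$ (self-adjoint by Lemma~\ref{l sead}) and invoking faithfulness gives $w=0$. The semifinite case then follows by cutting with an orthogonal family of central projections of finite trace and applying Lemma~\ref{l:preserves central projections}, exactly as in your patching step. This trace argument needs no type decomposition, no masa analysis, and no \v{S}emrl-type matrix combinatorics; in particular the matrix factors on which your approach stalls are dispatched by the same two lines, since $M_n(\mathbb{C})$ has a finite faithful trace. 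In short: your route could in principle be completed, but its hard step is unproven, while the statement has a short uniform proof you bypassed.
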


\begin{proof} Since $M$ is finite there exists a faithful normal semi-finite
trace $\tau$ on $M.$ We shall consider the following two cases.\smallskip

\emph{Case 1.} Suppose $\tau$ is a finite trace. Replacing $T$ with
$\widehat{T}=T - \delta (\frac12 T ({\mathbf 1}),{\mathbf 1})$ we can assume that
$T({\mathbf 1}) = 0$ (cf. Lemma \ref{l 2-local T(1) skew}) and $T(x) =
T(x)^*$, for every $x\in M_{sa}$ (cf. Lemma \ref{l sead}). By
Lemma \ref{l sea}, for every $x, y\in M_{sa}$ there exists a skew-hermitian element $a_{x,y}\in M$ such that $T(x)=[a_{x,y},x]$ and
$T (y)=[a_{x,y},y].$ Then
$$ T(x)y+xT(y)=[a_{x,y},x]y+x[a_{x,y},y]=[a_{x,y}, xy],$$
that is,
$$ [a_{x,y}, xy]=T(x) y+ xT(y).$$
Further
$$ 0 = \tau([a_{x,y}, xy])= \tau\left(T(x)y+xT(y)\right), $$
i.e. $\tau(T(x)y)=-\tau(x T(y))$, for every $x,y\in M_{sa}$. For arbitrary $u, v, w\in
M_{sa},$ set $x=u+v,$ and $y=w.$ The above identity implies
$$ \tau\left(T(u+v)w\right)=-\tau\left((u+v)T(w)\right)= $$
$$ =-\tau\left(uT(w)\right)- \tau\left(vT(w)\right)= \tau\left(T(u)w\right)+\tau\left(T(v)w\right)=\tau\left((T(u)+T(v))w\right), $$
and so
$$ \tau\left((T(u+v)-T(u)-T(v))w\right)=0$$ for all $u, v, w\in M_{sa}.$
Take $w=T(u+v)-T(u)-T(v).$ Then $\tau(ww^\ast)=0.$ Since the
trace $\tau$ is faithful it follows that  $ww^\ast=0,$ and hence
$w=0.$ Therefore
$$ T(u+v)=T(u)+T(v).$$\smallskip

\emph{Case 2.} As in \emph{Case 1}, we may assume $T({\mathbf 1})=0$. Suppose now that $\tau$ is a semi-finite trace. Since $M$ is finite there
exists a family of mutually orthogonal central projections
$\{z_i\}$ in $M$ such that $z_i$ has finite trace for all $i$ and
$\bigvee z_i=\mathbf{1}$ (cf. \cite[\S 2.2 or Corollary
2.4.7]{Sak}). By Lemma~\ref{l:preserves central
projections}, for each $i$, $T$ maps $z_i M$ into itself. From
Case 1, $T|_{z_i M}: z_i M \to z_i M$ is additive. Furthermore,
$$z_i T (x+y)= T|_{z_i M} (z_i x + z_i y) = T|_{z_i M} (z_i x)  +
T|_{z_i M} (z_i y) = z_i T(x) + z_i T(y),$$ for every $x,y\in M$
and every $i$. Therefore
$$ T(x+y) = \left(\sum_{i} z_i\right) T(x+y) = \sum_{i} z_i T(x+y) = \sum_{i} \left(z_i T(x) +z_i T(y)\right)
$$
$$= \left(\sum_{i} z_i\right) T(x) + \left(\sum_{i} z_i\right) T(y)  = T(x) + T(y), $$ for every $x,y\in
M.$ The proof is complete.
\end{proof}

Let $T: M\to M$ be a {\rm(}not necessarily linear nor continuous{\rm)} 2-local triple
derivation on an arbitrary von Neumann algebra. In this case there exist
orthogonal central projections $z_1, z_2\in M$ with
$z_1+z_2=\mathbf{1}$ such that: \begin{enumerate}[$(-)$]\item $z_1M$ is a finite von Neumann algebra;
\item $z_2M$ is a properly infinite von Neumann algebra,
\end{enumerate} (cf. \cite[\S 2.2]{Sak}).\smallskip

By Lemma~\ref{l:preserves central projections}, for each $k=1,2,$ $z_k T$ maps $z_k M$
into itself. By Corollary~\ref{l:addit on properly infinite}
and Proposition~\ref{p addit finite} both $z_1 T$ and $z_2 T$ are additive on $M_{sa}$. So
$T=z_1T+z_2T$ also is additive on $M_{sa}$.\smallskip

We have thus proved the following result:

\begin{proposition}\label{additgen} Let $T: M\to M$ be a {\rm(}not necessarily linear nor continuous{\rm)} 2-local triple
derivation on an arbitrary von Neumann algebra. Then the restriction $T|_{M_{sa}}$ is additive.$\hfill\Box$
\end{proposition}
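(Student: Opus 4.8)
The plan is to reduce the general case to the two situations already settled, namely finite and properly infinite von Neumann algebras, by means of the central decomposition of $M$. First I would invoke the structure theory of von Neumann algebras (cf. \cite[\S 2.2]{Sak}) to split $M$ by a pair of orthogonal central projections $z_1, z_2 \in M$ with $z_1 + z_2 = \mathbf{1}$, chosen so that $z_1 M$ is a finite von Neumann algebra and $z_2 M$ is a properly infinite von Neumann algebra.

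The next step is to check that $T$ is compatible with this decomposition. By Lemma \ref{l:preserves central projections}, for each central projection $z_k$ we have $T(z_k M) \subseteq z_k M$ and $T(z_k x) = z_k T(x)$ for every $x \in M$; in particular $z_k T$ maps $z_k M$ into itself. Since the restriction of a triple derivation of $M$ to the direct summand $z_k M$ is again a triple derivation of $z_k M$, the map $T|_{z_k M}$ is itself a (not necessarily linear nor continuous) 2-local triple derivation on the von Neumann algebra $z_k M$. Hence Proposition \ref{p addit finite} applies to $T|_{z_1 M}$ and Corollary \ref{l:addit on properly infinite} applies to $T|_{z_2 M}$, so that each of $z_1 T$ and $z_2 T$ is additive on $M_{sa}$.

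Finally I would glue the two pieces. For $x \in M_{sa}$ the identity $T(x) = z_1 T(x) + z_2 T(x) = T(z_1 x) + T(z_2 x)$ holds, and for $x, y \in M_{sa}$ the components $z_k x, z_k y$ lie in $(z_k M)_{sa}$. Using $z_k(x+y) = z_k x + z_k y$ together with additivity on each summand, one computes
\begin{align*}
T(x+y) &= T(z_1 x + z_1 y) + T(z_2 x + z_2 y) \\
&= T(z_1 x) + T(z_1 y) + T(z_2 x) + T(z_2 y) = T(x) + T(y),
\end{align*}
which is the desired additivity.

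I do not expect a serious obstacle at this stage: all of the analytic weight has already been carried by the finite and properly infinite cases (Proposition \ref{p addit finite} and Corollary \ref{l:addit on properly infinite}), which in turn rest on the Bunce--Wright--Mackey--Gleason and Dorofeev--Sherstnev theorems. The one point demanding care is the verification that $T$ restricts to a genuine 2-local triple derivation on each central summand and intertwines the central projections, and this is exactly the content of Lemma \ref{l:preserves central projections}; everything else is a routine regrouping.
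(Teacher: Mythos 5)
Your proposal is correct and is essentially identical to the paper's own proof: the same central decomposition $z_1+z_2=\mathbf{1}$ into finite and properly infinite summands, the same appeal to Lemma~\ref{l:preserves central projections} to restrict $T$ to each summand, and the same applications of Proposition~\ref{p addit finite} and Corollary~\ref{l:addit on properly infinite} followed by regrouping. Your explicit remark that $T|_{z_k M}$ is again a 2-local triple derivation on $z_k M$ (because triple derivations commute with central projections) is left implicit in the paper, but is a welcome clarification rather than a deviation.
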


\subsection{Main result}\label{subsec:3.3}\ \medskip

We can state now the main result of this paper.

\begin{theorem}\label{mainthm}
Let $M$ be  an arbitrary von Neumann algebra and let   $T: M\to M$
be a {\rm(}not necessarily linear nor continuous{\rm)} 2-local
triple derivation. Then $T$ is a triple derivation {\rm(}hence linear and continuous{\rm)}. Equivalently,
the set Der$_{t} (M)$, of all triple derivations on $M,$ is algebraically 2-reflexive in the set $\mathcal{M}(M)= M^M$
of all mappings from $M$ into $M$.
\end{theorem}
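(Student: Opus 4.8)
The plan is to assemble the machinery developed in the preceding lemmas and propositions into a proof that any $2$-local triple derivation $T$ on an arbitrary von Neumann algebra $M$ is in fact a genuine triple derivation. By Proposition \ref{additgen} we already know that $T|_{M_{sa}}$ is additive, so the first task is to upgrade this additivity on the self-adjoint part to full (real-)linearity and then to extend $T$ to a well-defined additive map on all of $M$. I would begin by invoking homogeneity: as observed in the introduction, triple derivations are $1$-homogeneous over $\mathbb{R}$, so every $2$-local triple derivation is $\mathbb{R}$-homogeneous; combined with additivity on $M_{sa}$ this makes $T|_{M_{sa}}$ a real-linear map. The next step is to define $T$ on a general element $x = h + ik$ (with $h, k \in M_{sa}$) and check consistency. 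Since every element of $M$ splits uniquely into real and imaginary parts, I would set things up so that additivity on $M_{sa}$ forces additivity on $M$, using that $T$ agrees pointwise with triple derivations, which are automatically $\mathbb{C}$-linear.

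The second main task is to verify the triple Leibniz identity
$$
T\{a,b,c\} = \{T(a),b,c\} + \{a,T(b),c\} + \{a,b,T(c)\}
$$
for all $a,b,c \in M$. The strategy here is to reduce the triple product identity, via polarization and the additivity/linearity already established, to checking it on a convenient class of elements — for instance on self-adjoint $a$, or on single elements where $T$ coincides with an honest triple derivation. Because $T$ is $2$-local, for any fixed pair of elements there is a triple derivation $\delta_{a,b}$ agreeing with $T$ at both points, and the inner/structural description $\delta(x) = [a,x] + b\circ x$ from the preamble can be exploited. Once linearity is in hand, I would argue that a linear map which is locally (in the $2$-local sense) a triple derivation and which we have shown to be additive must satisfy the Leibniz rule: evaluating the identity requires controlling $T$ on three arguments simultaneously, but linearity lets me polarize and reduce to the diagonal case $\{a,a,a\}$, where $2$-locality with the pair $(a,\mathbf 1)$ or $(a, a)$ pins down $T(a)$ as $\delta_{a,\cdot}(a)$ and lets the derivation identity be read off.

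I expect the genuine obstacle to lie precisely in passing from the \emph{pointwise} agreement $T(x) = \delta_{x,y}(x)$ to the \emph{global} multiplicative identity, since the triple derivations $\delta_{a,b}$ furnished by $2$-locality vary with the pair $(a,b)$ and there is no a priori single derivation agreeing with $T$ everywhere. The key leverage is linearity: once $T$ is known to be linear (via Proposition \ref{additgen}, homogeneity, and the real/imaginary decomposition), the difference between the two sides of the Leibniz identity is a fixed multilinear expression, and one can evaluate it by specializing arguments to elements of a single abelian von Neumann subalgebra $\mathcal{W}^\ast(z)$, where Lemma \ref{l linearity on single generated von Neumann subalgebras} gives a concrete description $T(x) = [a_z, x] + b_z \circ x$ that manifestly obeys the derivation rule. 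The delicate point will be handling mixed (non-commuting) triples $\{a,b,c\}$ where $a,b,c$ do not lie in a common $\mathcal{W}^\ast(z)$; I would resolve this by polarization, writing the symmetric trilinear triple product in terms of the diagonal values $\{w,w,w\}$ for self-adjoint $w$ ranging over sums of the arguments, each of which \emph{is} captured by a single abelian subalgebra, and then using the already-established additivity and linearity to reassemble the full identity.
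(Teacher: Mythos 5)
There is a genuine gap, and it sits exactly at the step you pass over most quickly: getting from additivity of $T|_{M_{sa}}$ (Proposition~\ref{additgen}) to additivity/$\mathbb{C}$-linearity of $T$ on all of $M$. Your justification --- ``additivity on $M_{sa}$ forces additivity on $M$, using that $T$ agrees pointwise with triple derivations, which are automatically $\mathbb{C}$-linear'' --- is not an argument. Two-locality only furnishes, for each \emph{pair} of points, some triple derivation agreeing with $T$ at those two points; the derivation $S_{h+ik,\,h}$ witnessing agreement at $h+ik$ and $h$ need not agree with $T$ at $k$, so nothing forces $T(h+ik)=T(h)+iT(k)$. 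The paper never proves complex linearity abstractly; instead it identifies $T$ with a concrete linear map. First, Lemma~\ref{l sea} applied to the pair $(x,x^2)$ gives $T(x^2)=T(x)x+xT(x)$, so the real-linear map $T|_{M_{sa}}$ extends to a Jordan derivation, which by Bre\v{s}ar's theorem and Sakai's innerness theorem equals $[a,\cdot]$ for some skew-hermitian $a$ (Lemma~\ref{jor}). Then the 2-local triple derivation $\widehat{T}-[a,\cdot]$ vanishes on $M_{sa}$, and a separate, genuinely nontrivial argument (Lemma~\ref{l:vanish on self-adjoint}: write $x=x_1+ix_2$, perturb to the positive invertible $y=\mathbf{1}+x_2$, kill the Jordan part $b_{x,y}$ via Lemmas~\ref{l:ax=xa, bx=xb=0} and~\ref{l:product}, and finally play $x$ against $ix$ to get $T(x)^*=T(x)$ and $T(x)^*=-T(x)$ simultaneously) shows it vanishes identically. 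Your proposal contains nothing playing the role of this lemma, i.e., no mechanism at all for controlling $T$ on non-self-adjoint elements, which is the heart of the problem.

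Your endgame for the Leibniz identity, by contrast, is salvageable and genuinely different from the paper's (the paper never verifies the Leibniz rule directly; it exhibits $T$ as $[a,\cdot]+\delta\left(\frac12 T(\mathbf{1}),\mathbf{1}\right)$, an explicit triple derivation). If full $\mathbb{C}$-linearity were in hand, then 2-locality applied to the pair $\left(a,\{a,a,a\}\right)$ --- not $(a,\mathbf{1})$ or $(a,a)$, neither of which controls $T(\{a,a,a\})$ --- yields the diagonal identity $T\{a,a,a\}=2\{T(a),a,a\}+\{a,T(a),a\}$, and polarization then gives the full Leibniz rule. But note two corrections to your sketch of that polarization: it must be carried out with complex coefficients over \emph{arbitrary} elements, since the triple product is conjugate-linear in the middle slot and the mixed terms cannot be separated by real substitutions alone; and restricting to self-adjoint $w$ ``captured by a single abelian subalgebra'' does not work, both because sums of the arguments are not self-adjoint and because Lemma~\ref{l linearity on single generated von Neumann subalgebras} only applies to self-adjoint generators. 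So the proposal's second half could replace the paper's use of Bre\v{s}ar--Sakai at the final stage, but only after the linearity gap --- the actual crux --- is filled.
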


We need the following two Lemmata.

\begin{lemma}\label{jor} Let $T: M\to M$ be a {\rm(}not necessarily linear nor continuous{\rm)}
2-local triple derivation on a  von Neumann algebra with
$T(\mathbf{1})=0.$ Then there exists a skew-hermitian element
$a\in M$ such that $T(x)=[a, x],$ for all $x\in M_{sa}.$
\end{lemma}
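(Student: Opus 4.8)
The plan is to upgrade $T$ to a genuine complex-linear \emph{local} triple derivation and then invoke the already known algebraic reflexivity of $\mathrm{Der}_t(M)$, i.e. property \eqref{eq reflexivity2} from \cite{BurFerPe2013}. First I would record that $T|_{M_{sa}}$ is real-linear: it is additive by Proposition~\ref{additgen}, and since every triple derivation is linear, the $1$-homogeneity of $2$-local triple derivations noted in Section~\ref{sec:intro} gives $T(tx)=tT(x)$ for $x\in M_{sa}$ and $t\in\mathbb{R}$. Writing each $x\in M$ uniquely as $x=u+iv$ with $u,v\in M_{sa}$, I then define $\delta:M\to M$ by $\delta(u+iv)=T(u)+iT(v)$; additivity and real-homogeneity of $T|_{M_{sa}}$ make $\delta$ a well-defined complex-linear map that agrees with $T$ on $M_{sa}$, and in particular $\delta(\mathbf 1)=T(\mathbf 1)=0.$

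Next I would check that $\delta$ is a local triple derivation, meaning $\delta(x)\in\mathrm{Der}_t(M)(x)$ for every $x\in M$. Fix $x=u+iv$ with $u,v\in M_{sa}$. By Lemma~\ref{l sea} there is a skew-hermitian $a=a_{u,v}\in M$ with $T(u)=[a,u]$ and $T(v)=[a,v]$, whence
$$\delta(x)=T(u)+iT(v)=[a,u]+i[a,v]=[a,u+iv]=[a,x].$$
Since $a$ is skew-hermitian, the map $[a,\,\cdot\,]$ is a triple derivation on $M$ (this is the $b=0$ instance of the structure recalled before Lemma~\ref{l 2-local T(1) skew}; one verifies the triple Leibniz rule directly). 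Thus $\delta(x)$ lies in the evaluation $\mathrm{Der}_t(M)(x)$, for every $x\in M$.

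Having produced a linear map $\delta$ satisfying the hypothesis of \eqref{eq reflexivity2} for $\mathcal{D}=\mathrm{Der}_t(M)$, I would invoke the algebraic reflexivity of $\mathrm{Der}_t(M)$ established in \cite{BurFerPe2013} (valid since $M$, with its canonical triple product, is a JB$^*$-triple) to conclude that $\delta\in\mathrm{Der}_t(M)$. Finally, $\delta$ is a triple derivation with $\delta(\mathbf 1)=0$, so by the structure result recalled before Lemma~\ref{l 2-local T(1) skew} we have $\delta(x)=[a,x]+b\circ x$ with $a,b$ skew-hermitian; evaluating at $\mathbf 1$ forces $b=\delta(\mathbf 1)=0$, hence $\delta(x)=[a,x]$. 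Since $\delta$ coincides with $T$ on $M_{sa}$, this yields $T(x)=[a,x]$ for all $x\in M_{sa}$, as desired.

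The main obstacle is the middle step: although $T$ is controlled only on pairs of self-adjoint elements, one must verify the local condition for $\delta$ at arbitrary (non-self-adjoint) $x$. The key observation that resolves it is that applying Lemma~\ref{l sea} to the self-adjoint pair $(u,v)$ of real and imaginary parts of $x$ produces a single skew-hermitian $a$ handling both, and the identity $[a,u]+i[a,v]=[a,x]$ then packages $\delta(x)$ as the value at $x$ of the inner triple derivation $[a,\,\cdot\,]$. This is precisely what reduces the second-order ($2$-local) statement to the first-order reflexivity \eqref{eq reflexivity2}; everything preceding this lemma (the additivity machinery through Proposition~\ref{additgen}) enters only to guarantee that $T|_{M_{sa}}$ is real-linear, which is what allows the complex-linear extension $\delta$ to be formed in the first place.
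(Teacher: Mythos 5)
Your proof is correct, but it takes a genuinely different route from the paper's at the decisive step. Both arguments begin the same way: Proposition~\ref{additgen} plus $1$-homogeneity give real-linearity of $T|_{M_{sa}}$, and one passes to the complex-linear extension $\hat T(x_1+ix_2)=T(x_1)+iT(x_2)$. The paper then stays inside associative/Jordan algebra theory: it applies Lemma~\ref{l sea} to the pair $(x,x^2)$ to extract the Jordan identity $T(x^2)=T(x)x+xT(x)$, concludes that $\hat T$ is a Jordan derivation, and invokes Bre\v{s}ar's theorem \cite{Bre} (Jordan derivations on semiprime algebras are derivations) followed by Sakai's innerness theorem \cite[Theorem 4.1.6]{Sak} to get $\hat T=[a,\cdot\,]$, with $a$ adjustable to be skew-hermitian because $T(M_{sa})\subseteq M_{sa}$ by Lemma~\ref{l sead}. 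You instead apply Lemma~\ref{l sea} to the pair $(u,v)$ of real and imaginary parts of an arbitrary element, which packages $\delta(x)=[a_{u,v},x]$ and so exhibits $\delta$ as a (one-)local triple derivation; the reflexivity theorem of \cite{BurFerPe2013} then upgrades $\delta$ to a triple derivation, and the structure of triple derivations on von Neumann algebras together with $\delta(\mathbf 1)=0$ forces $\delta=[a,\cdot\,]$ with $a$ skew-hermitian. The points you flag as needing care do hold: $[a,\cdot\,]$ with $a^\ast=-a$ is an associative $^*$-derivation and hence a triple derivation, so $\delta(x)$ genuinely lies in $\hbox{Der}_t(M)(x)$; and \eqref{eq reflexivity2} makes no continuity assumption, so it legitimately applies to the a priori unbounded linear map $\delta$. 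As for what each approach buys: yours is conceptually cleaner, reducing the $2$-local statement to the known $1$-local (reflexivity) theorem and making explicit the logical bridge between the two strands of literature, while bypassing the Jordan-derivation machinery; the paper's route is more self-contained and elementary in its dependencies, relying only on the classical theorems of Bre\v{s}ar and Sakai rather than on the considerably deeper JB$^*$-triple reflexivity result of \cite{BurFerPe2013}.
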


\begin{proof}
Let $x\in M_{sa}.$ By Lemma~\ref{l sea} there exist a
skew-hermitian element $a_{x,x^2}\in M$ such that
$$
T(x)=[a_{x,x^2},x],\, T(x^2)=[a_{x,x^2}, x^2].
$$
Thus
$$
T (x^2)=[a_{x,x^2},x^2]=[a_{x,x^2},x]x+x[a_{x,x^2},x]=T(x)x+xT(x),
$$
i.e.
\begin{equation}\label{jord}
T (x^2)=T(x)x+xT(x),
\end{equation}
 for every $x\in M_{sa}$.\smallskip

By Proposition~\ref{additgen} and Lemma \ref{l sead}, $T|_{M_{sa}} : M_{sa}
\to M_{sa}$ is a real linear mapping. Now, we consider the linear extension
$\hat{T}$ of $T|_{M_{sa}}$ to $M$ defined by
$$ \hat{T}(x_1+ix_2)=T(x_1)+i T(x_2),\, x_1, x_2\in M_{sa}.$$

Taking into account the homogeneity of $T,$ Proposition~\ref{additgen}
and the identity~\eqref{jord} we obtain that $\hat{T}$ is a
Jordan derivation on $M.$  By \cite[Theorem 1]{Bre} any Jordan
derivation on a semi-prime algebra is a derivation. Since $M$ is
von Neumann algebra, $\hat{T}$ is a derivation on $M$ (see also \cite{Sinclair70} and \cite{John96}).
Therefore there exists an element $a\in M$ such that $\hat{T}(x)=[a,x]$
for all $x\in M.$ In particular, $T(x)=[a, x]$ for all $x\in
M_{sa}.$ Since $T(M_{sa}) \subseteq M_{sa}$, we can assume that
$a^\ast=-a$, which completes the proof. \end{proof}

\begin{lemma}\label{l:vanish on self-adjoint} Let $T: M\to M$ be a {\rm(}not necessarily linear nor continuous{\rm)}
2-local triple derivation on a  von Neumann algebra. If $T|_{M_{sa}}\equiv 0,$ then
$T\equiv 0.$
\end{lemma}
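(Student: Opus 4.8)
The plan is to fix an arbitrary $x\in M$ and show $T(x)=0$ directly, exploiting that $T$ annihilates every self-adjoint element. Write $x=h+ik$ with $h,k\in M_{sa}$, and choose scalars $\alpha,\beta>0$ large enough that $h'=h+\alpha\mathbf{1}$ and $k'=k+\beta\mathbf{1}$ are positive and invertible, hence of full support. The key idea is to feed $T$ the two self-adjoint ``probes'' $h'$ and $k'$: since $T$ vanishes on $M_{sa}$, any triple derivation that agrees with $T$ at such a probe must itself vanish there, and full support will let the positivity Lemma~\ref{l:product} kill the Jordan-multiplication part of that triple derivation.

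Concretely, first I would apply 2-locality to the pair $(x,h')$ to obtain a triple derivation $\delta_1=[a,\cdot]+b\circ\cdot$ (with $a,b$ skew-hermitian) satisfying $\delta_1(x)=T(x)$ and $\delta_1(h')=T(h')=0$. Since $h'$ is self-adjoint, Lemma~\ref{l:ax=xa, bx=xb=0} gives $[a,h']=0$ and $b\circ h'=0$, i.e. $bh'=-h'b$. As $h'$ has full support, applying Lemma~\ref{l:product} to the self-adjoint element $ib$ (for which $(ib)h'+h'(ib)=i(bh'+h'b)=0$) forces $b=0$. Thus $\delta_1=[a,\cdot]$ with $\delta_1(\mathbf{1})=0$, whence $\delta_1(h)=\delta_1(h')-\alpha\,\delta_1(\mathbf{1})=0$ and $T(x)=\delta_1(h)+i\,\delta_1(k)=i[a,k]$, which is skew-hermitian because $[a,k]$ is self-adjoint. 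The symmetric argument applied to the pair $(x,k')$ produces a triple derivation $\delta_2=[c,\cdot]$ with $c$ skew-hermitian, $\delta_2(\mathbf{1})=0$ and $T(x)=[c,h]$, which is self-adjoint.

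Putting the two computations together, $T(x)$ is simultaneously self-adjoint and skew-hermitian, so $T(x)=0$; as $x$ was arbitrary, $T\equiv 0$. I expect the crux to be exactly the vanishing of the Jordan part $b\circ(\cdot)$ of the auxiliary triple derivations: a single 2-local evaluation at a self-adjoint probe only yields the relation $[a,h']+b\circ h'=0$, and it is the passage to a full-support (positive invertible) probe together with the positivity Lemma~\ref{l:product} that upgrades this to $b=0$ and thereby pins down the parity of $T(x)$. The shifts $h\mapsto h'$ and $k\mapsto k'$ are harmless precisely because $\delta_1(\mathbf{1})=\delta_2(\mathbf{1})=0$, so no additivity of $T$ across $x$ and $\mathbf{1}$ is needed.
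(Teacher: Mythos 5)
Your proposal is correct and uses essentially the same mechanism as the paper's proof: apply 2-locality to the pair consisting of $x$ and a positive invertible self-adjoint probe, kill the Jordan part $b\circ(\cdot)$ via Lemma~\ref{l:ax=xa, bx=xb=0} and Lemma~\ref{l:product}, and conclude that $T(x)$ vanishes because it is forced to be both self-adjoint and skew-hermitian. The only (cosmetic) difference is that the paper uses a single probe $\mathbf{1}+x_2$ built from the imaginary part (after normalizing $\|x_2\|<1$ by homogeneity) and then obtains the second parity constraint by replacing $x$ with $ix$ and invoking homogeneity of $T$, whereas you run the probe argument twice, once on the shifted real part and once on the shifted imaginary part, thereby avoiding any use of homogeneity.
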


\begin{proof}
Let $x\in M$ be an arbitrary element and let $x=x_1+ix_2,$ where
$x_1, x_2\in M_{sa}.$ Since $T$ is homogeneous, if necessary,
passing to the element $(1+\|x_2\|)^{-1} x,$ we can suppose that
$\|x_2\|<1.$ In this case the element $y=\mathbf{1}+x_2$ is
positive and invertible. Take skew-hermitian elements $a_{x,y}, b_{x,y}\in M$
such that
$$
T(x)=[a_{x,y},x]+b_{x,y}\circ x,
$$
$$
T(y)=[a_{x,y},y]+b_{x,y}\circ y.
$$
Since $T(y)=0,$ we get $[a_{x,y},y]+b_{x,y}\circ y=0.$ By Lemma~\ref{l:ax=xa,
bx=xb=0} we obtain that $[a_{x,y}, y]=0$ and $ib_{x,y} \circ y=0.$ Taking into
account that $ib_{x,y}$ is hermitian, $y$ is positive and invertible,
Lemma~\ref{l:product} implies that $b_{x,y}=0.$\smallskip

We further note that
$$
0=[a_{x,y},y]=[a_{x,y}, \mathbf{1}+x_2]=[a_{x,y}, x_2],
$$
i.e.
$$
[a_{x,y}, x_2]=0.
$$
Now,
$$
T(x)=[a_{x,y},x]+b_{x,y}\circ x=[a_{x,y}, x_1+ix_2]=[a_{x,y}, x_1],
$$
i.e.
$$
T(x)=[a_{x,y}, x_1].
$$
Therefore,
$$
T(x)^\ast=[a_{x,y}, x_1]^\ast=[x_1, a_{x,y}^\ast]=[x_1, -a_{x,y}]=[a_{x,y}, x_1]=T(x).
$$
So
\begin{equation}\label{iii} T(x)^\ast=T(x).
\end{equation}

Now replacing $x$ by $ix $ on~\eqref{iii} we obtain from the homogeneity of $T$ that
\begin{equation}\label{iv}T(x)^\ast=-T(x).
\end{equation}
Combining \eqref{iii} and \eqref{iv} we obtain that $T(x)=0,$ which finishes the proof. \end{proof}

\begin{proof}[\textit{Proof of Theorem~\ref{mainthm}}] Let us define $\widehat{T}=T - \delta \left(\frac12 T ({\mathbf 1}),{\mathbf 1}\right).$
Then $\widehat{T}$ is a 2-local triple derivation on $M$
with $\widehat{T}({\mathbf 1}) = 0$ (cf. Lemma \ref{l 2-local T(1)
skew}) and $\widehat{T}(x) = \widehat{T}(x)^\ast,$ for every $x\in M_{sa}$ (cf. Lemma
\ref{l sead}). By Lemma~\ref{jor} there exists an element $a\in M$
such that $\widehat{T}(x)=[a,x]$ for all $x\in M_{sa}.$ Consider the
$2$-local triple derivation $\widehat{T}-[a,\cdot].$ Since $(\widehat{T}-[a,
\cdot])|_{M_{sa}}\equiv 0,$ Lemma~\ref{l:vanish on self-adjoint}
implies that $\widehat{T}=[a, \cdot],$ and hence $T = [a, \cdot] + \delta \left(\frac12 T ({\mathbf 1}),{\mathbf 1}\right),$ witnessing the desired statement.
\end{proof}

\end{document}